\documentclass{amsart}
\usepackage{amsfonts,amssymb,amsmath,amsthm}
\usepackage{url}
\usepackage{enumerate}

\urlstyle{sf}
\newtheorem{thrm}{Theorem}[section]

\newtheorem{prop}[thrm]{Proposition}

\theoremstyle{definition}
\newtheorem{definition}[thrm]{Definition}

\newtheorem{ex}[thrm]{Example}
\numberwithin{equation}{section}

\author[S. Ianu\c{s}]{S. Ianu\c{s}$^*$}
\address{
Faculty of Mathematics and Computer Science\\
University of Bucharest\\
Str. Academiei 14, Bucharest 70109, Romania}
\thanks{$^*$Passed away on April 8th, 2010}

\author{A.~M. Ionescu}
\address{
Mathematics Department\\
Politehnica University of Bucharest\\
Splaiul Independen\c{t}ei 313, Sector 6, Bucharest 060042, Romania}
\email{aionescu@math.pub.ro}

\author{R. Mocanu}
\email{xipita@yahoo.com}

\author{G.~E. V\^{\i}lcu}
\address{
Department of Mathematics and Computer Science\\
Petroleum-Gas University of Ploie\c sti\\
Bulevardul Bucure\c sti, Nr. 39, Ploie\c sti 100680, Romania and
Research Center in Geometry, Topology and Algebra\\
University of Bucharest\\
Str. Academiei 14, Bucharest 70109, Romania}
 \email{gvilcu@upg-ploiesti.ro}

\keywords{Almost contact metric manifold, almost Hermitian manifold,
contact-complex Riemannian submersion}
\subjclass{Primary 53C15}

\begin{document}

\title[Riemannian submersions from almost contact metric manifolds]{Riemannian submersions from almost contact metric manifolds}

\maketitle

\begin{abstract}
In this paper we obtain the structure equation of a contact-complex
Riemannian submersion and give some applications of this equation in
the study of almost cosymplectic manifolds with K\"{a}hler fibres.
\end{abstract}

\section{Introduction} \label{sect1}

The Riemannian submersions are of great interest both in mathematics
and physics, owing to their applications in the Yang-Mills theory,
Kaluza-Klein theory, supergravity and superstring theories
\cite{BOU,BLW,CORM,IV,IV2,MST,VIS,WATS}, they being intensively
studied for different ambient spaces by several authors (see e.g.
\cite{BI,CC,CHEN,CHN4,ESC,FAL,IMaV,IMV,NAR,SAH,TM,VW,VLC2,VLC,WAT1,WV2}).
In this paper we study the so-called contact-complex Riemannian
submersions, i.e. Riemannian submersions from an almost contact
metric manifold $(M, \varphi, \xi, \eta,g)$ onto an almost Hermitian
manifold $(N,J,g')$, which are $(\varphi,J)$-holomorphic mappings
(see \cite{IP}). The paper is organized as follows. The second
section contains some preliminaries on almost contact metric
manifolds and Riemannian submersions. In the next section, some
results concerning the contact-complex submersions are given; among
other, we discuss the transference of Gray-type curvature conditions
from the total space to the fibres and to the base space. In the 4th
section we obtain the structure equation of a contact-complex
Riemannian submersion. This equation is used in the last section in
the study of almost cosymplectic manifolds with K\"{a}hler fibres.
In particular, we prove that the fundamental 2-form of such kind of
manifolds is harmonic.

\section{Preliminaries}

As usual, all manifolds and maps involved are assumed to be of class
$C^\infty$. Furthermore, we denote by $\Gamma(E)$ the space of the
sections of a vector bundle $E$.

Let $M$ be a differentiable manifold. An almost contact structure on
$M$, denoted by $(\varphi, \xi, \eta)$, consists in a (1,1)-tensor
field $\varphi$, a non-singular vector field
 $\xi$, and a non-singular 1-form  $\eta$
that verify:
\begin{equation}\label{ec1}
\varphi^2=-I +\eta \otimes \xi
\end{equation}
and
\begin{equation}\label{ec2}
\eta(\xi)=1,
\end{equation}
where $I$ stands for the identity endomorphism of the fiber bundle
$TM$. It follows (see e.g. \cite{BL,CHN1}) that the manifold has odd
dimension and there holds
 $\varphi \xi=0$ and $\eta \circ \varphi=0$, as well. Moreover, $(M,\varphi,\xi,\eta)$ is called an almost contact manifold.

A Riemannian metric $g$ on $M$ is said to be adapted to the almost
contact structure $(\varphi,\xi,\eta)$ if
\begin{equation}\label{ec3}
g(\varphi X,\varphi Y)=g(X,Y) -\eta(X)\eta(Y)
\end{equation}
for any sections $X,Y \in \Gamma(TM)$. The setting
$(\varphi,\xi,\eta,g)$ is called an almost contact metric structure
on $M$, while $M$ is an almost contact metric manifold. It follows
from (\ref{ec1})-(\ref{ec3}) that the relation: $\eta(X)=g(X,\xi)$
holds for any section $X \in \Gamma(TM)$. Define next the
fundamental 2-form $\Phi$ of the almost contact metric manifold $M$
by the analogous to the almost Hermite geometry formula
\begin{equation}\label{ec4}
\Phi (X,Y)=g(X,\varphi Y)
\end{equation}
for any sections of the tangent bundle $X,Y \in \Gamma(TM)$. Basic
examples and features of those structures will be referred from the
book of Blair \cite{BL}.

Let $\pi:M \longrightarrow N$ be a surjective differentiable map
between two smooth manifolds $M$ and $N$. We call it a submersion if
its rank is constant, equal to the dimension of the target manifold
$N$.

Let now $M$ and $N$ be Riemannian manifolds with Riemannian metrics
$g$ and $g'$ respectively. Write $F_{x}=\pi^{-1}(x')$ for the fibre
in $x \in M$ of the map $\pi:M \longrightarrow N$, where
$\pi(x)=x'$. The tangent vectors to the fibre belong to the kernel
of the linear map $\pi_*=d\pi$; one calls them vertical. The
distribution of the vertical vectors, $\mathcal{V}$, is well defined
(with constant rank) and $\mathcal{V}=Ker\ d\pi$. It follows that
$\mathcal{V}$ is integrable and the (connected components of the)
fibres are its integral manifolds (of maximal rank). The orthogonal
complement of $\mathcal{V}$ in $TM$ with respect to the Riemannian
metric $g$ is the horizontal distribution, denoted by $\mathcal{H}$.
The submersion $\pi:(M,g) \longrightarrow (N,g')$ is a Riemannian
submersion if $d\pi_x: \mathcal{H}_x \longrightarrow T_{x'} N$
becomes a linear isometry, for any  $x \in M$.

If an horizontal vector field (a section in the horizontal
distribution $\mathcal{H}$) is $\pi$-related (so that projectable)
to a vector field on the base manifold $N$, one calls it basic. It
follows from the definition of the Riemannian submersion that
\begin{equation}\label{ec5}
g(X,Y)=g'(X',Y')\circ \pi
\end{equation}
if $X$ and $Y$ are basic vector fields, $\pi$-related to $X',Y' \in
\Gamma(TN)$, respectively.

The projections in $TM$ on $\mathcal{V}$ and $\mathcal{H}$ will be
respectively denoted by $v$, and $h$, as well. If $\nabla$ and
$\nabla'$ are the Levi-Civita connections of the Riemannian metrics
on $M$, $N$, then we have the following (see \cite{ON}).
\begin{prop}
Let $\pi : (M,g) \longrightarrow (N',g')$ be a Riemannian
submersion. If $X$, $Y$ are basic vector fields $\pi$-related to
$X'$, $Y'$ on $N$, then:\\
i. $h[X,Y]$ is basic, $\pi$-related to $[X',Y']$.\\
ii. $h(\nabla_X Y)$ is basic, $\pi$-related to $\nabla'_{X'} Y'$.\\
iii. $[X,V]$ is vertical for any $V \in \Gamma(V)$.
\end{prop}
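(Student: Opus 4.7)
The plan is to handle the three items in a natural order: (iii) first (it feeds into (ii)), then (i) (it also feeds into (ii)), and finally (ii), which is the substantive Koszul calculation.

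For part (iii), I would observe that any vertical vector field $V$ is $\pi$-related to the zero vector field on $N$, because $d\pi(V)=0$ by definition of $\mathcal{V}=\ker d\pi$. Since $X$ is basic, it is $\pi$-related to $X'$, and by naturality of the Lie bracket under $\pi$-relatedness, $[X,V]$ is $\pi$-related to $[X',0]=0$. This forces $d\pi([X,V])=0$, i.e.\ $[X,V]$ is vertical. For part (i), the same naturality gives that $[X,Y]$ is $\pi$-related to $[X',Y']$; splitting $[X,Y]=h[X,Y]+v[X,Y]$ and using that $v[X,Y]$ lies in $\ker d\pi$, I conclude $d\pi(h[X,Y])=[X',Y']\circ\pi$, which says exactly that $h[X,Y]$ is basic and $\pi$-related to $[X',Y']$.

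For part (ii), the idea is to compare the Koszul formulas on $(M,g)$ and $(N,g')$ when tested against an arbitrary basic horizontal field $Z$ that is $\pi$-related to $Z'$. Writing
\[
2g(\nabla_X Y,Z)=X g(Y,Z)+Y g(Z,X)-Z g(X,Y)+g([X,Y],Z)+g([Z,X],Y)-g([Y,Z],X),
\]
I would rewrite each term using \eqref{ec5}: the metric terms become $g'(Y',Z')\circ\pi$ etc., and their derivatives along basic fields agree with $X'g'(Y',Z')$ etc.\ composed with $\pi$, because $X$ is $\pi$-related to $X'$. For the three bracket terms I would invoke part (i): since $Z$ (resp.\ $Y$, $X$) is horizontal, only the horizontal component of the bracket contributes, and that horizontal component is basic and $\pi$-related to the corresponding bracket on $N$. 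Summing, the right-hand side equals $2g'(\nabla'_{X'}Y',Z')\circ\pi$. Since $Z$ was an arbitrary basic horizontal field and basic fields span $\mathcal{H}$ pointwise, this identifies $h(\nabla_X Y)$ with the horizontal lift of $\nabla'_{X'}Y'$, proving the claim.

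The only genuinely delicate step is part (ii), and even there the only point that requires attention is justifying that the bracket terms produce the expected $g'([\cdot,\cdot],\cdot)\circ\pi$ despite the possible vertical parts of $[X,Y]$, $[Z,X]$, $[Y,Z]$; this is handled by part (i) together with the horizontality of the remaining factor in each inner product. Parts (i) and (iii) are immediate consequences of the functoriality of the Lie bracket under smooth maps, so I do not expect any obstacle there.
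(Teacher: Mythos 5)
Your proof is correct and is the standard argument of O'Neill: the paper itself gives no proof of this proposition, merely quoting it from \cite{ON}, and your reasoning (naturality of the Lie bracket under $\pi$-relatedness for (i) and (iii), then the Koszul formula tested against basic horizontal fields for (ii)) is exactly the classical one found there. No gaps; the one delicate point you flag --- that the vertical components of the brackets drop out of the inner products against horizontal fields --- is handled correctly.
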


The fundamental tensors of a submersion were defined by O'Neill.
They are $(1,2)$-tensors on $M$, given by the formulae:
\begin{equation}\label{ec6}
T_{\tilde X} \tilde Y=h\nabla_{v\tilde X} v\tilde Y +
v\nabla_{v\tilde X} h\tilde Y,
\end{equation}
\begin{equation}\label{ec7}
A_{\tilde X} \tilde Y=v\nabla_{h\tilde X} h\tilde Y +
h\nabla_{h\tilde X} v\tilde Y,
\end{equation}
where $\tilde X, \tilde Y$ are vector fields on $M$. One gets
immediately:
\begin{equation}\label{ec8}
T_U W = T_W U,\   \forall\ U,W \in \Gamma(V)
\end{equation}
and
\begin{equation}\label{ec9}
A_X Y =-A_Y X=\frac{1}{2}v[X,Y],\   \forall\ X,Y \in \Gamma(H).
\end{equation}

For the sake of simplicity, $U,V,W$ etc. will stand for vertical
vectors, $X,Y,Z$ etc. for horizontal vectors or vector fields,
respectively.

This paper will consider Riemannian submersions where the total
space $M$ is an almost contact metric manifold and the base space
$N$ is an almost Hermitian manifold with an almost complex structure
$J$ compatible to $g'$. Let $\Omega$ be the fundamental 2-form of
$(N,g',J)$ so that $\Omega(X',Y')=g(X',JY')$. Basic properties of
almost Hermitian manifolds are found e.g. in the fundamental work of
Kobayashi and Nomizu \cite{KN}.

Recall the definition of $(\varphi,J)$-holomorphic mappings from an
almost contact manifold  to an almost complex manifold (see
\cite{IP}).
\begin{definition}
Let $(M, \varphi, \xi, \eta)$ be almost contact and $(N,J)$ be
almost complex manifolds, respectively. The map $\pi:M
\longrightarrow N$ is $(\varphi,J)$-holomorphic if $J \circ
d\pi=d\pi \circ \varphi$.
\end{definition}
Now let consider Riemannian structures.
\begin{definition}
Let $(M, \varphi, \xi, \eta,g)$ be an almost contact  metric
manifold and $(N,J,g')$ be an almost Hermitian manifold. A
Riemannian submersion $\pi:M \longrightarrow N$ is a contact-complex
(Riemannian submersion) if it is $(\varphi,J)$-holomorphic, as well.
\end{definition}

Next, we recall some examples (see \cite{FIP}).
\begin{ex}
For $(M,g_1)$ and $(N,g_2)$ Riemannian manifolds, take the product
$M \times N$. Its tangent bundle naturally splits as $T(M \times
N)=TM \oplus TN$. For $(x,y) \in M \times N$ one defines the
symmetric bilinear forms $g_1 \oplus g_2$ on all $T_{(x,y)}(M\times
N)=T_x M \oplus T_y N$ as
\begin{equation}\label{ec11}
g=\pi^*_1 g_1+\pi^*_2 g_2
\end{equation}
where $\pi_1:M \times N\longrightarrow M$ and $\pi_2:M \times N
\longrightarrow N$ are the natural projections. One gets a
Riemannian tensor on $M \times N$ and a Riemannian submersion from
$M \times N$ to $M$ (the mapping $\pi_1$).
\end{ex}

\begin{ex}
R. Bishop and B. O'Neill introduced the warped product of Riemannian
manifolds, which varies the metric on some factor, e.g. $\tilde
g_2=f(x)g_2$, where $f$ is a positive function on $N$ and
\[g=\pi^*_1 g_1+(f\circ \pi_1)\pi^*_2 g_2.\] The metric on the fibres
depends upon the value of the map in the base point of them. It
holds that $\pi_1:(N \times_f F, \tilde g)\longrightarrow (N,g_1)$
is a Riemannian submersion with fundamental tensor
\begin{equation}\label{ec12}
T_U V=-\frac{1}{2f}g(U,V){\rm grad}f.
\end{equation}

The above formula shows that the fibres become totally umbilical
submanifolds of $M \times N$. The associated mean curvature vector
field $H=-\frac{1}{2f} {\rm grad} f$ shows further that the fibres
cannot be minimal (equivalently, since umbilical, nor geodesic)
unless $f$ is constant. If $f$ is constant, one essentially gets
again the product manifold of the above example, where both
projections are Riemannian submersions with totally geodesic fibres.
\end{ex}

\begin{ex}
Take as $\pi$ the natural projection of the tangent bundle of a
Riemannian manifold $(M,g)$ to it. Then $T$M can be endowed with the
canonical metric (Sasakian metric) $G$ by: \[G(\tilde X,\tilde
Y)=g(\pi_*\tilde X,\pi_*\tilde Y)\circ \pi+g(K\tilde X,K\tilde
Y)\circ \pi\] where $K$ is the Dombrowski connection map associated
to $\nabla^{M}$. One immediately gets the Riemannian submersion
definition properties for $\pi:(TM,G)\longrightarrow (M,g)$ . The
fibres are totally geodesic. If $TM$ is endowed with the
Cheeger-Gromoll metric (see \cite{BL}) one gets a Riemannian
submersion from $(TM,\tilde g)$ to $(M,g)$, as well.
\end{ex}

\begin{ex}
Taking above the hypersurface consisting of unit vectors in $TM$
with respect to the metric $g$, one gets that $T_1 M=\{v \in TM \mid
\parallel v\parallel =1\}$ is the total space (of a spherical bundle
over the base manifold $M$) with the almost contact metric structure
naturally induced by the almost Hermitian structure on $TM$ and the
projection $\pi:T_1 M \longrightarrow M$ is a contact-complex
Riemannian submersion \cite{BL}.
\end{ex}

\begin{ex}
The Hopf fibration $\pi:S^{2n+1} \longrightarrow CP^n$ gives a
contact-complex Riemannian submersion considering the Sasakian
structure on $S^{2n+1}$ and a suitable multiple of the Fubini-Study
metric on the complex manifold $CP^n$.
\end{ex}

\section{Tensor $B$ of contact-complex Riemann submersions}

Define on the total space a $(1,2)$-tensor $B$ in the
contact-complex Riemannian submersions setting by
\begin{equation}\label{2}
B(\tilde X,\tilde Y)=v \nabla_{h \tilde X} \varphi h \tilde Y-v
\nabla_{\varphi h \tilde X} h \tilde Y+ h \nabla_{h \tilde X}
\varphi v \tilde Y-h \nabla_{\varphi h \tilde X} v \tilde Y,
\end{equation}
where $\nabla$ is the Levi-Civita connection on $(M,g)$.

This tensor appears in the work of Watson and Vanhecke \cite{WV1} on
Riemannian submersions between almost Hermitian manifolds.
\begin{prop}
Let $\pi:M \longrightarrow N$ be a contact-complex Riemannian
submersion. Then\\
i. $B(V,\tilde Y)=0$ for vertical $V$ and arbitrary $\tilde Y$. In
particular, $B$ restricts to zero on the vertical distribution.\\
ii. $B(\varphi X,\xi)=h\nabla_X \xi$ for any horizontal $X$. In
particular, if $M$ is Sasakian, then $B(\varphi X,\xi)$ is not zero.
If $M$ is almost cosymplectic, then $B(\varphi X,\xi)=0$.\\
iii. $B$ restricts to a $J$-invariant tensor on the horizontal
distribution $\mathcal{H}$.
\end{prop}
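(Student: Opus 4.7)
The plan is to dispose of each of the three claims by direct substitution into the defining formula (\ref{2}), after setting up the compatibility between $\varphi$ and the orthogonal splitting $TM=\mathcal{H}\oplus\mathcal{V}$. For (i), note that if $\tilde X$ is vertical then $h\tilde X=0$ and hence $\varphi h\tilde X=0$; each of the four terms in (\ref{2}) has either $h\tilde X$ or $\varphi h\tilde X$ as the connection direction or as the vector being differentiated, so $B(V,\tilde Y)$ vanishes termwise.

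For (ii), the first preliminary step is to locate $\xi$ in the splitting and to establish that $\varphi$ preserves both $\mathcal{H}$ and $\mathcal{V}$. Applying $d\pi$ to $\varphi\xi=0$ together with $(\varphi,J)$-holomorphicity yields $J(d\pi\xi)=0$, and the invertibility of $J$ forces $\xi\in\ker d\pi=\mathcal{V}$; the same identity immediately gives $\varphi\mathcal{V}\subseteq\mathcal{V}$. The antisymmetry of $\Phi$ that follows from (\ref{ec1})--(\ref{ec3}) then gives $g(\varphi X,V)=-g(X,\varphi V)=0$ for horizontal $X$ and vertical $V$, hence $\varphi\mathcal{H}\subseteq\mathcal{H}$. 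With these in place, substituting $\tilde X=\varphi X$ (with $X$ horizontal) and $\tilde Y=\xi$ into (\ref{2}) kills three of the four terms because $\varphi\xi=0$ and $h\xi=0$; what remains is $-h\nabla_{\varphi^2 X}\xi$, and since $\eta(X)=g(X,\xi)=0$ on horizontal vectors one has $\varphi^2 X=-X$, yielding $B(\varphi X,\xi)=h\nabla_X\xi$. The two particular consequences then follow from the structure-specific form of $\nabla\xi$: the Sasakian identity $\nabla_X\xi=-\varphi X$ gives a nonzero horizontal vector for nonzero horizontal $X$, while in the almost cosymplectic case the standard identity for $\nabla\xi$ forces $h\nabla_X\xi=0$.

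For (iii), with $X,Y$ horizontal both $v\tilde X$ and $v\tilde Y$ are zero and (\ref{2}) reduces to $B(X,Y)=v\nabla_X\varphi Y-v\nabla_{\varphi X}Y$, manifestly vertical. Since $B$ is tensorial in both arguments (the non-tensorial correction terms die off because $\varphi$ preserves $\mathcal{H}$), I can extend $X,Y$ to horizontal vector fields, in which case $\eta(X)=\eta(Y)=0$ identically. Computing $B(\varphi X,\varphi Y)$, expanding $\varphi^2$ via (\ref{ec1}) and dropping the $\eta$-terms gives $B(\varphi X,\varphi Y)=-v\nabla_{\varphi X}Y+v\nabla_X\varphi Y=B(X,Y)$, which is the claimed $\varphi$-invariance on $\mathcal{H}$. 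The main obstacle I anticipate is the almost cosymplectic implication in (ii): the vanishing $h\nabla_X\xi=0$ is not formally automatic from the submersion structure equations but rests on a specific property of $\nabla\xi$ in almost cosymplectic geometry that one must import from the background literature.
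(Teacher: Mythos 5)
Your argument is correct and follows essentially the same route as the paper's: part (i) is the termwise vanishing you describe, part (ii) reduces to the single surviving term $-h\nabla_{\varphi^{2}X}\xi=h\nabla_{X}\xi$, and part (iii) is the $\varphi^{2}=-I$ computation on $\mathcal{H}$ after noting that $B$ is tensorial there; the extra work you do to establish $\xi\in\mathcal{V}$ and the $\varphi$-invariance of both distributions is material the paper simply imports from Watson (Proposition~\ref{3.2}), so your write-up is merely more self-contained. The one substantive point is the caveat you raise at the end, and you are right to raise it: for a genuinely almost cosymplectic manifold there is no identity forcing $h\nabla_{X}\xi=0$. One has $\nabla_{X}\xi=-A^{*}X$ with $A^{*}$ the Olszak tensor of the paper's Section~5, which is nonzero in general, and the condition $A_{X}\xi=h\nabla_{X}\xi=0$ in fact reappears later as a nontrivial hypothesis (it characterizes K\"ahler base spaces in the final theorem), so it cannot be automatic here. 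The paper's own proof quietly replaces ``almost cosymplectic'' by ``cosymplectic'', for which $\nabla\xi\equiv 0$ does hold; the correct repair is therefore not to import a stronger identity from the literature, as you tentatively propose, but to weaken that particular claim to the cosymplectic case --- which is exactly what the paper's proof (though not its statement) does.
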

\begin{proof}
i. The first assertion is trivial.\\
ii. From (\ref{2}) one gets \[B(\varphi X,\xi)=-h \nabla_{\varphi h
\varphi X}\xi=-h\nabla_{\varphi^2 X}\xi=h\nabla_X \xi\] since
$\varphi ^2 X=-X$, for any horizontal $X$. If $M$ is Sasaki, then
$\nabla_X \xi=-\varphi X$, so $B(\varphi X,\xi)\not=0$ for any
horizontal vector field $X$.

If $M$ is cosymplectic we have $\nabla_X \xi=0$ and the conclusion
is obvious.\\
iii. Since $\mathcal{H}$ is $\varphi$-invariant and $\varphi ^2
X=-X$ for any horizontal vector field $X$, the result follows.
\end{proof}

\begin{prop}\label{p2}
Let $\pi:M \longrightarrow N$ be a  contact-complex Riemannian
submersion. Then:\\
i. For any vector field $\tilde Z$ on $M$, the linear mapping
$B(\tilde Z,\cdot )$ sends forward and backwards horizontal vectors
to vertical vectors.\\
ii. $B(X,Y)=B(Y,X)=A_X \varphi Y - A_{\varphi X}Y$ for horizontal
vector fields $X$ and $Y$.\\
iii. $B(X,Y)=0$ for horizontal X and Y if and only if $B(X,X)=0$ for
any horizontal $X$.
\end{prop}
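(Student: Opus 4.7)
The plan is to attack the three parts largely by direct inspection of the defining formula \eqref{2} for $B$, exploiting the $h/v$--decomposition together with the properties of the O'Neill tensor $A$ summarised in \eqref{ec7}--\eqref{ec9}.

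For part~(i) I would read off the $h/v$--structure of the four summands in the definition of $B$: each of the first two terms has the form $v\nabla_{*}(h\tilde Y)$ (vertical-valued, depending only on $h\tilde Y$), while each of the last two has the form $h\nabla_{*}(\varphi v\tilde Y)$ or $h\nabla_{*}(v\tilde Y)$ (horizontal-valued, depending only on $v\tilde Y$). Feeding in a purely horizontal $\tilde Y$ therefore kills the last two terms and leaves a vertical output, while feeding in a purely vertical $\tilde Y$ kills the first two and leaves a horizontal output. That is exactly the ``swap'' assertion claimed for $B(\tilde Z,\cdot)$.

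For part~(ii) I first need the auxiliary fact that in a contact-complex submersion $\varphi$ preserves both $\mathcal{V}$ and $\mathcal{H}$. The vertical invariance follows from $(\varphi,J)$--holomorphicity, since $d\pi(\varphi V)=J\,d\pi(V)=0$ for $V\in\Gamma(\mathcal{V})$. Horizontal invariance is then a consequence of the skew-symmetry $g(\varphi X,Y)=-g(X,\varphi Y)$, which one extracts from \eqref{ec3} together with $\eta\circ\varphi=0$: testing against a vertical $V$ gives $g(\varphi X,V)=-g(X,\varphi V)=0$. With $\varphi\mathcal{H}\subset\mathcal{H}$ secured, the definition of $B$ on horizontal inputs collapses to $B(X,Y)=v\nabla_X\varphi Y-v\nabla_{\varphi X}Y$, and since $\varphi X,\varphi Y$ are horizontal, \eqref{ec7} identifies each summand as an $A$--value, giving $B(X,Y)=A_X\varphi Y-A_{\varphi X}Y$. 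The symmetry $B(X,Y)=B(Y,X)$ then drops out by applying $A_XY=-A_YX$ from \eqref{ec9} to both terms.

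Part~(iii) is a one-line polarisation once part~(ii) is in hand: $B$ is $\mathbb{R}$--bilinear and (by (ii)) symmetric on $\mathcal{H}\otimes\mathcal{H}$, so $B(X+Y,X+Y)=B(X,X)+2B(X,Y)+B(Y,Y)$, and vanishing on the diagonal forces vanishing on all pairs. The only real obstacle I anticipate is the book-keeping in (ii): I must be sure that $\varphi X$ is horizontal before invoking \eqref{ec7} to recast $v\nabla_X\varphi Y$ as $A_X\varphi Y$; once the $\varphi$--invariance of $\mathcal{H}$ is verified, everything else is formal manipulation.
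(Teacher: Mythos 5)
Your proof is correct and follows essentially the same route as the paper, which simply declares parts (i) and (ii) ``trivial'' and deduces (iii) from the symmetry of $B$ on $\mathcal{H}$ by polarisation. You supply the details the paper omits (the $h/v$ book-keeping in the four summands, the $\varphi$-invariance of $\mathcal{V}$ and $\mathcal{H}$ needed before invoking the definition of $A$, and the skew-symmetry $A_XY=-A_YX$ for the symmetry of $B$), and all of these check out.
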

\begin{proof}
i,ii. Trivial; iii. This statement follows immediately from the
symmetry of $B$ on the horizontal distribution $\mathcal{H}$.
\end{proof}

\begin{prop}
Let $\pi:M \longrightarrow N$ be a contact-complex Riemannian
submersion. Then for any basic X, Y and vertical V one has
\begin{equation}\label{ex}
g(B(X,Y),V)=g(\nabla_V \varphi Y+\varphi \nabla_V Y,X)+2Vg(\varphi
X,Y)
\end{equation}
\end{prop}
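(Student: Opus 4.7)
The plan is to unwind the definition of $B$ on purely horizontal arguments, pass to the inner product with a vertical $V$, and then peel off the two terms in the target formula using metric compatibility and the skew-symmetry of $\varphi$.

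First I would observe that since $X$ and $Y$ are horizontal, the last two summands in the definition (\ref{2}) of $B(\tilde X,\tilde Y)$ vanish, so
\[
B(X,Y)=v\nabla_{X}\varphi Y-v\nabla_{\varphi X}Y.
\]
Because $\mathcal H$ is $\varphi$-invariant, $\varphi X$ and $\varphi Y$ are horizontal; because $\pi$ is $(\varphi,J)$-holomorphic and $X$ is basic, $\varphi X$ is basic as well (it is $\pi$-related to $JX'$). Hence both $g(\varphi Y,V)=0$ and $g(Y,V)=0$, and Proposition of Section 2(iii) applies to the pair $(X,V)$ and also to the pair $(\varphi X,V)$, giving $[X,V]$ and $[\varphi X,V]$ vertical. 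Taking the inner product with the vertical vector $V$ kills the $v$ projections, so
\[
g(B(X,Y),V)=g(\nabla_{X}\varphi Y,V)-g(\nabla_{\varphi X}Y,V).
\]

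Next I would handle each term by the standard trick: differentiate the identically zero function $g(\varphi Y,V)$ along $X$, use metric compatibility, then swap $\nabla_{X}V$ for $\nabla_{V}X$ modulo a vertical correction (permitted because $\varphi Y$ is horizontal and $[X,V]$ is vertical); apply metric compatibility a second time along $V$ to transfer the derivative onto $\varphi Y$. This yields
\[
g(\nabla_{X}\varphi Y,V)=-g(\varphi Y,\nabla_{V}X)=-Vg(\varphi Y,X)+g(\nabla_{V}\varphi Y,X),
\]
and the skew-symmetry $g(\varphi A,B)=-g(A,\varphi B)$ (an elementary consequence of (\ref{ec3}) together with $\eta\circ\varphi=0$) rewrites $-Vg(\varphi Y,X)$ as $Vg(\varphi X,Y)$. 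The same recipe applied to $g(\nabla_{\varphi X}Y,V)$, using $[\varphi X,V]$ vertical and then moving the $\varphi$ off $\nabla_{V}\varphi X$ to produce $\varphi\nabla_{V}Y$, gives
\[
g(\nabla_{\varphi X}Y,V)=-Vg(\varphi X,Y)-g(\varphi\nabla_{V}Y,X).
\]
Subtracting the two expressions produces exactly the right-hand side of (\ref{ex}).

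The only genuine subtlety is bookkeeping: making sure $\varphi X$ really is basic so that the bracket $[\varphi X,V]$ is vertical, and consistently applying the skew-symmetry of $\varphi$ with respect to $g$ to align the three occurrences of $g(\varphi X,Y)$. No deep identity is needed beyond (\ref{ec3}), the relation $\eta\circ\varphi=0$, and part (iii) of the proposition on basic vector fields from Section 2; the computation is entirely algebraic once those hooks are in place.
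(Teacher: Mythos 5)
Your proposal is correct and follows essentially the same route as the paper: the paper invokes $B(X,Y)=A_X\varphi Y-A_{\varphi X}Y$ from Proposition \ref{p2}(ii) where you unwind the definition (\ref{2}) directly (the same identity, since $A_XZ=v\nabla_XZ$ for horizontal $X,Z$), and then both arguments proceed by metric compatibility, the verticality of $[X,V]$ and $[\varphi X,V]$, and the skew-symmetry of $\varphi$. Your explicit remark that $\varphi X$ is basic (being $\pi$-related to $JX'$), which justifies $[\varphi X,V]\in\Gamma(\mathcal V)$, is a detail the paper leaves implicit in its ``analogously'' step.
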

\begin{proof}
Using the properties of  $A$ and $\varphi$ one gets \[g(A_X \varphi
Y,V)=g(v\nabla_X \varphi Y,V)=-g(\varphi Y,\nabla_X V)=\]
\[=-g(\varphi Y, [X,V]+\nabla_V X)=-g(\varphi Y,\nabla_V X)=\]
\[=-Vg(X,\varphi Y)+g(X,\nabla_V \varphi Y)\] since the bracket
$[X,V]$ is vertical. Analogously \[g(\nabla_{\varphi X}
Y,V)=-Vg(\varphi X,Y)-g(\varphi \nabla_V Y,X)\] so that with
property ii. in Proposition \ref{p2} the result follows.
\end{proof}

Next, we recall the fundamental properties of Riemann
contact-complex submersions due to Watson \cite{WAT}.

\begin{prop}\label{3.2} \cite{WAT}
Let $\pi :M \rightarrow N$ be a contact-complex Riemannian submersion. Then: \\
i. $ \xi \in \Gamma (\mathcal{V}), \eta (X) =0$, for any horizontal
$X$; the vertical and horizontal distributions $\mathcal{V}$
and $\mathcal{H}$ are $ \varphi $-invariant. \\
ii. $h(\nabla _{X} \varphi) Y$ is basic, $ \pi$-related to $(\nabla '_{X'} J) Y'$, for basic $X,Y$ on $M$, $ \pi$-related to $X',Y'$ on $N$. \\
iii. $\pi ^{*} \Omega=\Phi$.  \\
iv. $N_1 (X,Y)=(\pi ^{*} N_{J})(X,Y)$, for basic $X,Y$, where $N_1$
denotes the normality tensor of $M$ and $N_{J}$ the Nijenhuis tensor
of $N$.
\end{prop}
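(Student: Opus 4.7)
The plan is to exploit the $(\varphi,J)$-holomorphicity $J\circ d\pi=d\pi\circ\varphi$ jointly with Proposition 2.1 on basic vector fields, and to treat the four items essentially in the order stated, since each one is used in proving the next.

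For (i), I would first apply $d\pi$ to the identity $\varphi\xi=0$, getting $J\,d\pi(\xi)=0$ and hence $d\pi(\xi)=0$, so $\xi\in\Gamma(\mathcal V)$. The statement $\eta(X)=g(X,\xi)=0$ for horizontal $X$ is then immediate. The $\varphi$-invariance of $\mathcal V$ follows in the same way: if $V\in\Gamma(\mathcal V)$ then $d\pi(\varphi V)=J\,d\pi(V)=0$. Finally, the $\varphi$-invariance of $\mathcal H$ comes from the skew-symmetry $g(\varphi X,V)=-g(X,\varphi V)$ (an elementary consequence of \eqref{ec3}): for $X$ horizontal and $V$ vertical, $\varphi V$ is vertical and hence $g(\varphi X,V)=0$.

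For (ii), since $Y$ is basic and $\varphi$ preserves $\mathcal H$ by (i), $\varphi Y$ is horizontal; moreover $d\pi(\varphi Y)=J\,d\pi(Y)=JY'$, so $\varphi Y$ is basic, $\pi$-related to $JY'$. Proposition 2.1.ii then gives that $h(\nabla_X\varphi Y)$ is basic, $\pi$-related to $\nabla'_{X'}(JY')$, and that $h(\nabla_X Y)$ is basic, $\pi$-related to $\nabla'_{X'}Y'$. Using once more that $\varphi$ preserves $\mathcal H$, the vector $\varphi h(\nabla_X Y)$ is basic, $\pi$-related to $J\nabla'_{X'}Y'$. Subtracting, $h(\nabla_X\varphi)Y=h(\nabla_X\varphi Y)-\varphi h(\nabla_X Y)$ is basic and $\pi$-related to $(\nabla'_{X'}J)Y'$.

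For (iii), the calculation is direct: for arbitrary $\tilde X,\tilde Y$,
\[(\pi^*\Omega)(\tilde X,\tilde Y)=\Omega(d\pi\,\tilde X,d\pi\,\tilde Y)=g'(d\pi\,h\tilde X,J\,d\pi\,h\tilde Y)=g'(d\pi\,h\tilde X,d\pi\,\varphi h\tilde Y).\]
By (i) the vector $\varphi h\tilde Y$ is horizontal, so \eqref{ec5} converts the right-hand side to $g(h\tilde X,\varphi h\tilde Y)$. A splitting of $\Phi(\tilde X,\tilde Y)=g(\tilde X,\varphi\tilde Y)$ into horizontal and vertical parts, together with the $\varphi$-invariance of $\mathcal V$ and $\mathcal H$ and the skew-symmetry $g(\varphi\cdot,\cdot)=-g(\cdot,\varphi\cdot)$, reduces $\Phi(\tilde X,\tilde Y)$ to $g(h\tilde X,\varphi h\tilde Y)$, proving $\pi^*\Omega=\Phi$.

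For (iv), I would decompose $N_1(X,Y)=\varphi^2[X,Y]+[\varphi X,\varphi Y]-\varphi[\varphi X,Y]-\varphi[X,\varphi Y]+2d\eta(X,Y)\xi$ and project onto $\mathcal H$. Since $X,Y$ are horizontal, $\eta(X)=\eta(Y)=0$ by (i), so $2d\eta(X,Y)=-\eta([X,Y])$ and the last term is vertical, hence killed. For each of the brackets, $\varphi X$ and $\varphi Y$ are basic by (i) and the $(\varphi,J)$-property, and Proposition 2.1.i says $h[\varphi X,\varphi Y]$, $h[\varphi X,Y]$, $h[X,\varphi Y]$, $h[X,Y]$ are all basic, $\pi$-related respectively to $[JX',JY']$, $[JX',Y']$, $[X',JY']$, $[X',Y']$. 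Since $\varphi$ preserves $\mathcal H$, applying $h$ and $\varphi$ commute on the basic pieces, and one reads off that $h(N_1(X,Y))$ is basic, $\pi$-related to $-[X',Y']+[JX',JY']-J[JX',Y']-J[X',JY']=N_J(X',Y')$. I expect the main obstacle to be precisely this item: one must also check that the vertical part of $N_1(X,Y)$ vanishes so that the equality $N_1(X,Y)=(\pi^*N_J)(X,Y)$ holds as written (with $\pi^*N_J$ understood as the horizontal lift of $N_J(X',Y')$); this reduces via \eqref{ec9} to a cancellation among the terms $A_XY$, $A_{\varphi X}\varphi Y$, $\varphi A_{\varphi X}Y$, $\varphi A_X\varphi Y$, which must be extracted from the compatibility of $A$ with $\varphi$ on the horizontal distribution.
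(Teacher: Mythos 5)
The paper offers no proof of this proposition at all---it is quoted verbatim from Watson \cite{WAT}---so there is nothing internal to compare your argument against; I can only judge it on its own terms. Your items (i) and (ii) are correct and complete. In (iv), your horizontal computation is exactly what the paper later needs (in Theorem 5.4 the identity is invoked only in the form ``$N_1(X,Y)$ is $\pi$-related to $N_J(X',Y')$''), so the vertical part you flag is irrelevant for that reading; but be aware that the cancellation you hope to extract ``from the compatibility of $A$ with $\varphi$'' is not there to be had: one computes $v N_1(X,Y)=-2\bigl(A_XY-A_{\varphi X}\varphi Y+\varphi A_{\varphi X}Y+\varphi A_X\varphi Y\bigr)$, and nothing in the definition of a contact-complex submersion forces this to vanish. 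The equality in (iv) must therefore be read as $\pi$-relatedness of the horizontal, projectable parts, not as an identity of tangent vectors on $M$.

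The genuine gap is in (iii). Splitting $\Phi(\tilde X,\tilde Y)=g(\tilde X,\varphi\tilde Y)$ into horizontal and vertical parts kills only the cross terms; what survives is $\Phi(\tilde X,\tilde Y)=g(h\tilde X,\varphi h\tilde Y)+g(v\tilde X,\varphi v\tilde Y)$, and the second summand is precisely the fundamental $2$-form $\hat\Phi$ of the fibre, which does not vanish unless the fibres are one-dimensional (spanned by $\xi$, as in the Hopf fibration). Your claimed reduction of $\Phi(\tilde X,\tilde Y)$ to $g(h\tilde X,\varphi h\tilde Y)$ is therefore false in general, and with it the literal identity $\pi^*\Omega=\Phi$ evaluated on arbitrary vectors. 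What is true---and what the rest of the paper actually uses, e.g. in the structure equation where $\hat\delta\hat\Phi$ enters as a separate contribution---is that $\pi^*\Omega$ and $\Phi$ agree on horizontal (in particular basic) vectors. You should either restrict the arguments in (iii) to horizontal vectors or carry the term $g(v\tilde X,\varphi v\tilde Y)$ explicitly.
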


Next, we investigate the contact-complex submersions satisfying
Gray-type curvature conditions, denoted $K_{i \varphi}$, $i=1,2,3$.
We recall that the so-called Gray identities were introduced by A.
Gray \cite{GR} for almost Hermitian manifolds $(M,g,J)$:
\[
K_1:\ R(X,Y,Z,W)=R(X,Y,JZ,JW),
\]
\[
K_2:\ R(X,Y,Z,W)=R(JX,Y,Z,JW)+R(X,JY,Z,JW)+R(X,Y,JZ,JW),
\]
\[
K_3:\ R(X,Y,Z,W)=R(JX,JY,JZ,JW).
\]

For an almost contact metric manifold $(M, \varphi, \xi, \eta,g)$,
the analogous identities have been considered by Bonome, Hervella
and Rozas \cite{BHR}:
\[
K_{1\varphi}:\ R(X,Y,Z,W)=R(X,Y,\varphi Z,\varphi W),
\]
\[
K_{2\varphi}:\ R(X,Y,Z,W)=R(\varphi X,Y,Z,\varphi W)+R(X,\varphi
Y,Z,\varphi W)+R(X,Y,\varphi Z,\varphi W),
\]
\[
K_{3\varphi}:\ R(X,Y,Z,W)=R(\varphi X,\varphi Y,\varphi Z,\varphi
W).
\]

We remark that other Gray-type curvature conditions on almost
contact metric manifolds have been introduced in \cite{MM}.

\begin{prop}\label{3.3}
Let $\pi :M \rightarrow N$ a contact-complex submersion where the total space $M$ satisfies $K_{1 \varphi}$. Then: \\
i. If O'Neill tensor $T$ is $\varphi$-bilinear, then the fibres satisfy on the $K_{1 \varphi}$. \\
ii. If O'Neill tensor $A$ vanishes, then the base space $N$
satisfies on $K_{1}$.
\end{prop}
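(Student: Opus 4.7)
The plan is to reduce both parts of Proposition~\ref{3.3} to the standard O'Neill curvature identities for a Riemannian submersion. The vertical (Gauss) identity reads
\[\hat R(U,V,W,F) = R(U,V,W,F) + g(T_U W,\, T_V F) - g(T_U F,\, T_V W)\]
for vectors $U,V,W,F$ tangent to a fibre, while the horizontal identity collapses, under the hypothesis $A\equiv 0$ of (ii), to $R^*(X',Y',Z',H')\circ\pi = R(X,Y,Z,H)$ whenever $X,Y,Z,H$ are basic lifts of $X',Y',Z',H'$.

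For (i), Proposition~\ref{3.2}(i) guarantees that $\xi$ is vertical and that $\mathcal V$ is $\varphi$-invariant, so each fibre inherits an almost contact metric structure from the one on $M$. I would apply the vertical identity twice, to $\hat R(U,V,W,F)$ and to $\hat R(U,V,\varphi W,\varphi F)$, and subtract. The $R$-terms cancel by the hypothesis $K_{1\varphi}$ on $M$. The remaining $T$-terms have the shape $g(T_UW,T_VF)-g(T_U\varphi W,T_V\varphi F)$ together with its index-swapped partner. Since $T_UW$ is horizontal whenever $U,W$ are vertical, and since $\varphi$ restricts to an isometry of $\mathcal H$ (because $\eta$ vanishes on horizontal vectors), the assumed $\varphi$-bilinearity of $T$---read as $T_U\varphi W=\varphi T_UW$---lets me move $\varphi$ outside and then discard it via \eqref{ec3}. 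Both correction pairs then collapse, and $K_{1\varphi}$ transfers to the fibre.

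For (ii), let $X',Y',Z',H'$ be vector fields on $N$ and $X,Y,Z,H$ their basic horizontal lifts. Because $\mathcal H$ is $\varphi$-invariant by Proposition~\ref{3.2}(i) and $\pi$ is $(\varphi,J)$-holomorphic, $\varphi Z$ is the basic lift of $JZ'$ and $\varphi H$ the basic lift of $JH'$. The vanishing of $A$ in O'Neill's horizontal formula yields both $R^*(X',Y',Z',H')\circ\pi = R(X,Y,Z,H)$ and $R^*(X',Y',JZ',JH')\circ\pi = R(X,Y,\varphi Z,\varphi H)$. The $K_{1\varphi}$ identity on $M$ equates the two right-hand sides, and $K_1$ on $N$ follows.

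The main obstacle is interpretational rather than computational: I must fix a precise reading of ``$T$ is $\varphi$-bilinear'' so that $T_U\varphi W$ and $\varphi T_UW$ can be exchanged; once this convention is chosen (and its compatibility with the horizontal-valued nature of $T$ on vertical inputs is checked), both halves of the proposition are immediate substitutions into O'Neill's equations.
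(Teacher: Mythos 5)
Your proposal is correct and follows essentially the same route as the paper: both parts are direct substitutions into O'Neill's vertical and horizontal curvature identities, with the $T$-correction terms in (i) removed via the $\varphi$-bilinearity of $T$ together with the compatibility of $g$ with $\varphi$ (using that $T_UW$ is horizontal, so $\eta(T_UW)=0$), and (ii) following from $A\equiv 0$ plus the fact that $\varphi Z$, $\varphi H$ are the basic lifts of $JZ'$, $JH'$. The paper is terser at the final step of (i) (it just invokes ``the $\varphi$-invariance of the metric''), but the underlying argument is identical to yours.
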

\begin{proof}
i. The curvature relations in a Riemannian submersion setting give:
\[R(U,V,F,W)= \hat{R} (U,V,F,W)+g(T_{U} W, T_{V} F)-g(T_{V} W, T_{U}
F),\] \[R(U,V,\varphi F,\varphi W)=\hat{R} (U,V,\varphi F,\varphi
W)+g(T_{U} \varphi W, T_{V} \varphi F)-g(T_{V} \varphi W, T_{U}
\varphi F),\] where $R$ and $\hat R$ are the curvature tensors on
$M$ and one of the fibres, respectively,  $U,V,F,W$ being vertical
vector fields.

Then the assertion follows using the $\varphi$-invariance of the
metric $g$.\\
ii. From \[R(X,Y,Z,H)=R^*(X,Y,Z,H)-2g(A_X Y,A_Z H)+g(A_Y Z,A_X
H)-g(A_X Z,A_Y H),\] where $R^*(X,Y,Z,H)=R'(X',Y',Z',H')\circ \pi$
and $X,Y,Z,H$ are basic vector fields $\pi$-related to $X',Y',Z',H'$
respectively, it follows that \[R(X,Y,\varphi Z,\varphi
H)=R^*(X,Y,\varphi Z,\varphi H)-2g(A_X Y,A_{\varphi Z} \varphi H)+\]
\[+g(A_Y \varphi Z,A_X \varphi H)-g(A_X \varphi Z,A_Y \varphi H).\]
If $A\equiv 0$, then \[R'(X',Y',Z',H')=R'(X',Y',JZ,JH)\] since
$\varphi Z$, $\varphi H$ are related to $JZ'$ and $JH'$.
\end{proof}

\begin{prop}
Let $\pi:M \longrightarrow N$ be a contact-complex Riemannian
submersion. Suppose that both $M$ and $N$ satisfy $K_{1 \varphi}$
and $K_1$ respectively. If $[X,\varphi X] \in \Gamma(V)$ for any
basic vector field $X$, then $\mathcal{H}$ is integrable.
\end{prop}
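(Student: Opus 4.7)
The plan is to show that the O'Neill integrability tensor $A$ vanishes on horizontal vectors; by (\ref{ec9}) this is equivalent to $\mathcal{H}$ being integrable. The hypothesis $[X,\varphi X]\in\Gamma(\mathcal{V})$ for basic $X$ rewrites via (\ref{ec9}) as $A_X\varphi X=0$ for every basic $X$, and since $A$ is a tensor this extends pointwise to all horizontal $X$.

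The first step is to promote this hypothesis to a symmetry using the tensor $B$. From Proposition \ref{p2}.ii, $B(X,X)=A_X\varphi X-A_{\varphi X}X=2A_X\varphi X=0$ for horizontal $X$, so Proposition \ref{p2}.iii yields $B\equiv 0$ on $\mathcal{H}$, i.e.
\[ A_X\varphi Y=A_{\varphi X}Y\quad\text{for all horizontal }X,Y. \]
Applying this identity twice gives the useful consequence $A_{\varphi X}\varphi Y=-A_X Y$.

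Next I would use the horizontal O'Neill curvature formula (as written in the proof of Proposition \ref{3.3}.ii),
\[ R(X,Y,Z,W)-R^*(X,Y,Z,W)=-2g(A_X Y,A_Z W)+g(A_Y Z,A_X W)-g(A_X Z,A_Y W), \]
for basic $X,Y,Z,W$. Because $\pi$ is $(\varphi,J)$-holomorphic, $\varphi Z$ is also basic ($\pi$-related to $JZ'$), hence both $K_{1\varphi}$ on $M$ and $K_1$ on $N$ say that $R$ and $R^*$ are invariant under the substitution $(Z,W)\mapsto(\varphi Z,\varphi W)$. Equating the two resulting expressions for $R-R^*$ and using $A_X\varphi Y=A_{\varphi X}Y$ throughout produces a relation involving only terms of the form $g(A_X Y,A_Z W)$ and $g(A_{\varphi X}Y,A_{\varphi Z}W)$. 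Specializing to $Z=X$, $W=Y$, the terms containing $A_X X$, $A_X\varphi X$ and $A_Y\varphi Y$ all vanish (the latter two by hypothesis, applied to the basic fields $X$ and $Y$), and the surviving terms collapse to
\[ |A_X\varphi Y|^2=5\,|A_X Y|^2. \]

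Finally, iterating this identity with $Y$ replaced by the (still basic) vector field $\varphi Y$ and using $\varphi^2 Y=-Y$ on $\mathcal{H}$ gives $|A_X Y|^2=|A_X\varphi^2 Y|^2=5\,|A_X\varphi Y|^2=25\,|A_X Y|^2$, forcing $A_X Y=0$. Since $A$ is tensorial and basic fields span $\mathcal{H}$ at each point, $A\equiv 0$ on $\mathcal{H}$, so by (\ref{ec9}) $v[X,Y]=0$ for all horizontal $X,Y$ and $\mathcal{H}$ is integrable. The main obstacle is the bookkeeping in the third step: one must rewrite all eight cross terms consistently via $A_X\varphi Y=A_{\varphi X}Y$ while tracking signs with care, so that the coefficient $5$ (rather than $1$) emerges — it is precisely this strict numerical factor that makes the iteration force $A_X Y$ to vanish.
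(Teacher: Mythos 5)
Your proposal follows the same route as the paper's proof: identify the hypothesis with $B(X,X)=0$, upgrade this to $B\equiv 0$ on $\mathcal{H}$ via Proposition \ref{p2}, and compare the horizontal O'Neill curvature formula evaluated on $(X,Y,X,Y)$ and on $(X,Y,\varphi X,\varphi Y)$ using $K_{1\varphi}$ on $M$ and $K_1$ on $N$. The difference is the endgame. The paper's final display is $5g(A_XY,A_XY)+g(A_X\varphi Y,A_X\varphi Y)=0$, a sum of two non-negative terms, so it concludes at once. Your bookkeeping lands instead on $|A_X\varphi Y|^2=5|A_XY|^2$, and redoing the computation confirms your sign: the $A$-part of the curvature formula gives $-3|A_XY|^2$ on $(X,Y,X,Y)$ and, after substituting $A_{\varphi X}\varphi Y=-A_XY$, $A_Y\varphi X=-A_X\varphi Y$ and $A_X\varphi X=A_Y\varphi Y=0$, gives $2|A_XY|^2-|A_X\varphi Y|^2$ on $(X,Y,\varphi X,\varphi Y)$; equating yields $|A_X\varphi Y|^2=5|A_XY|^2$, and this is insensitive to the overall sign convention in the curvature formula since both sides of the $K$-identity flip together. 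With the correct signs the paper's ``both summands are non-negative'' shortcut is unavailable, so the iteration you add ($Y\mapsto\varphi Y$, $\varphi^2Y=-Y$ on $\mathcal{H}$, hence $|A_XY|^2=25|A_XY|^2$) is not optional polish but the step that actually closes the argument. In effect your version is a repaired form of the paper's proof.

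One caveat you share with the paper: by (\ref{ec9}), $B(X,X)=2A_X\varphi X=v[X,\varphi X]$, so the condition that yields $B(X,X)=0$ is $[X,\varphi X]\in\Gamma(\mathcal{H})$, not $[X,\varphi X]\in\Gamma(\mathcal{V})$. As literally stated, verticality of $[X,\varphi X]$ gives $A_X\varphi X=\tfrac{1}{2}[X,\varphi X]$, which need not vanish, so your sentence that the hypothesis ``rewrites via (\ref{ec9}) as $A_X\varphi X=0$'' reverses the projection. Since the paper's own proof makes the identical identification, this is evidently a typo in the statement of the proposition rather than a gap specific to your argument, but the corrected hypothesis should be stated explicitly.
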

\begin{proof}
The curvature equations give now, setting $Z=X$ and $H=Y$:
\[-2g(A_X Y,A_X Y)+g(A_Y X,A_X Y)+g(A_X X, A_Y Y)+\]
\[+2g(A_X Y,A_X Y)-g(A_Y \varphi X, A_X \varphi Y)-g(A_{\varphi X} X,
A_Y \varphi Y)=0.\] Because $[X,\varphi X] \in \Gamma(V)$ says that
$B(X,X)=0$ for an horizontal vector field $X$, it is now known that
$B(X,Y)=0$ for horizontal $X,Y$.

It easy to see that $A_{\varphi X} \varphi Y=-A_X Y$ and this gives
now \[5g(A_X Y,A_X Y)+g(A_X \varphi Y, A_X \varphi Y)=0.\] Both
summands are non-negative, so that vanish; in particular $\mathcal
H$ is involutive.
\end{proof}

\begin{prop}
Let $\pi:M\rightarrow N$ be a contact-complex Riemannian submersion.
Suppose that $M$ satisfies on the $K_{2\varphi}$. If $A$ is
$\varphi$-bilinear then the base space $N$ also satisfies on $K_2$.
\end{prop}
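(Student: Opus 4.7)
The plan is to mimic the argument of Proposition \ref{3.3}(ii), but push it through all four curvature terms that appear in $K_{2\varphi}$. The starting point is the O'Neill-type identity used there:
\[
R(X,Y,Z,W)=R^{*}(X,Y,Z,W)-2g(A_{X}Y,A_{Z}W)+g(A_{Y}Z,A_{X}W)-g(A_{X}Z,A_{Y}W),
\]
where $X,Y,Z,W$ are basic horizontal lifts of $X',Y',Z',W'\in\Gamma(TN)$ and $R^{*}(X,Y,Z,W)=R'(X',Y',Z',W')\circ\pi$. I would first reduce the problem to checking the identity on basic horizontal fields (by tensoriality of $K_{2}$ on $N$) and then write $R(\cdot,\cdot,\cdot,\cdot)=R^{*}(\cdot,\cdot,\cdot,\cdot)+D(\cdot,\cdot,\cdot,\cdot)$, where $D$ is the obvious trilinear-in-$A$ correction.

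Next I would expand $K_{2\varphi}$ applied to $(X,Y,Z,W)$. Using that $\pi$ is $(\varphi,J)$-holomorphic, $\varphi X$ is basic and $\pi$-related to $JX'$ (and similarly for $\varphi W$, $\varphi Y$, $\varphi Z$), so the $R^{*}$-part of the expansion reads
\[
R^{*}(X,Y,Z,W)-R^{*}(\varphi X,Y,Z,\varphi W)-R^{*}(X,\varphi Y,Z,\varphi W)-R^{*}(X,Y,\varphi Z,\varphi W),
\]
which is precisely the $K_{2}$-defect of $N$ pulled back by $\pi$. The whole proposition therefore reduces to showing that the corresponding alternating sum of the $D$-terms vanishes.

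For this last step I would invoke the $\varphi$-bilinearity of $A$: $A_{\varphi X}Y=\varphi A_{X}Y=A_{X}\varphi Y$, which in view of Proposition \ref{3.2}(i) (so that horizontal vectors satisfy $\varphi^{2}=-I$ and, using the compatibility \eqref{ec3}, $g(\varphi U,\varphi V)=g(U,V)$ on the relevant vertical outputs of $A$) gives both $A_{\varphi X}\varphi W=-A_{X}W$ and $g(\varphi A_{X}Y,\varphi A_{Z}W)=g(A_{X}Y,A_{Z}W)$. Substituting these into each of $D(\varphi X,Y,Z,\varphi W)$, $D(X,\varphi Y,Z,\varphi W)$ and $D(X,Y,\varphi Z,\varphi W)$ and adding, a short bookkeeping shows that the coefficients of $g(A_{X}Y,A_{Z}W)$, $g(A_{Y}Z,A_{X}W)$ and $g(A_{X}Z,A_{Y}W)$ in the sum recombine to reproduce exactly $D(X,Y,Z,W)$, so the $A$-contributions cancel.

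The main obstacle I anticipate is precisely this last algebraic cancellation: keeping the signs and the symmetry $A_{X}Y=-A_{Y}X$ (from \eqref{ec9}) straight through three applications of $\varphi$-bilinearity, and handling the fact that the vertical outputs of $A$ need not be $\eta$-orthogonal to $\xi$. The $\varphi$-bilinearity hypothesis is what collapses every $\varphi^{2}$ to $-I$ on these outputs and absorbs the potential $\eta\otimes\xi$ correction in \eqref{ec1}; without it the argument would leave leftover $\xi$-terms that cannot be matched by anything on the $N$-side. Once the cancellation is verified, evaluating at arbitrary points of $N$ via basic lifts yields $K_{2}$ for $N$.
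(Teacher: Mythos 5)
Your proposal is correct and follows essentially the same route as the paper: apply the O'Neill curvature equation to convert the $K_{2\varphi}$ identity on $M$ into the $K_2$ identity on $N$ plus an $A$-correction, then use $\varphi$-bilinearity of $A$ (together with the observation that $\eta(A_XY)=0$, since $Y=\varphi P$ forces $A_XY=\varphi A_XP\perp\xi$) to reduce every $\varphi$-twisted term to its untwisted counterpart and cancel the correction. The bookkeeping you defer does close as claimed — the three twisted $D$-terms sum exactly to $D(X,Y,Z,W)$ — which is precisely the computation the paper writes out term by term.
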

\begin{proof}
The fundamental equations of a submersion give, taking into account
the hypothesis, that
\[
R'(X',Y',Z',T')-R'(JX',JY',Z',T')-R'(JX',Y',JZ',T')-R'(JX',Y',Z',JT')=
\]
\[
=2g(A_XY,A_ZT)-g(A_YZ,A_XT)-g(A_ZX,A_YT)
\]
\[
+g(A_{\varphi Y}Z,A_{\varphi X}T)-2g(A_{\varphi X}\varphi
Y,A_ZT)+g(A_Z\varphi X,A_{\varphi Y}T)
\]
\[
+g(A_Y\varphi Z,A_{\varphi X}T)-2g(A_{\varphi X} Y,A_{\varphi
Z}T)+g(A_{\varphi Z}\varphi X,A_YT)
\]
\[
+g(A_YZ,A_{\varphi X}\varphi T)-2g(A_{\varphi X} Y,A_Z{\varphi
T})+g(A_Z\varphi X,A_Y\varphi T).
\]

The right-hand side vanishes as follows. We have
\[
g(A_{\varphi Y}Z,A_{\varphi X}T)=g(\varphi A_YZ,\varphi
A_XT)=g(A_YZ,A_XT)+\eta(A_YZ)\eta(A_XT),
\]
since $A$ is $\varphi$-bilinear. On the other side,
\[
\eta(A_YZ)=g(A_YZ,\xi)=g(A_Y\varphi P,\xi)=g(\varphi A_YP,\xi)=0
\]
taking into account that $\mathcal{H}$ is $\varphi$-invariant, as
well, so that it exists $P\in\Gamma(\mathcal{H})$ such that
$Z=\varphi P$. Then $g(A_{\varphi Y}Z,A_{\varphi X}T)=g(A_YZ,A_XT)$.

Now consider $-2g(A_{\varphi X}\varphi Y,A_ZT)$ which can be
expanded
\[
g(A_{\varphi X}\varphi Y,A_ZT)=g(\varphi^2 A_XY,A_ZT)=
\]
\[
=-g(A_XY,A_ZT)+\eta(A_XY)g(\xi,A_ZT)=-g(A_XY,A_ZT).
\]

In an analogous way, all terms containing $\varphi$ will be replaced
by quantities not involving $\varphi$ so that
\[
R'(X',Y',Z',T')=R'(JX',JY',Z',T')+R'(JX',Y',JZ',T')+R'(JX',Y',Z',JT')
\]
for any vector fields $X',Y',Z',T'$ on the base space $N$.
\end{proof}

One gets in an analogous way the following result.
\begin{prop}
Let $\pi:M\rightarrow N$ be a contact-complex Riemannian submersion.
Suppose that  the O'Neill tensor $A$ satisfies on $A_X\varphi
Y=A_{\varphi X}Y$, for any horizontal vector fields $X,Y$. If
$K_{3\varphi}$ holds on  $M$, then $K_3$ holds on
 $N$.
\end{prop}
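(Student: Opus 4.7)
The plan is to follow the same strategy as in the preceding proposition, now with the $\varphi$-operator applied symmetrically to all four slots. I would start from O'Neill's formula for the horizontal-curvature component of a Riemannian submersion: for basic vector fields $X,Y,Z,T$ on $M$ that are $\pi$-related to $X',Y',Z',T'$ on $N$,
\[
R(X,Y,Z,T)=R'(X',Y',Z',T')\circ\pi-2g(A_XY,A_ZT)+g(A_YZ,A_XT)-g(A_XZ,A_YT).
\]
By Proposition~\ref{3.2}(i) the horizontal distribution $\mathcal H$ is $\varphi$-invariant, and since $\pi$ is $(\varphi,J)$-holomorphic, the fields $\varphi X,\varphi Y,\varphi Z,\varphi T$ are again basic and $\pi$-related to $JX',JY',JZ',JT'$. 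Applying O'Neill's formula to this $\varphi$-rotated quadruple gives a second identity of the same shape, whose base-space term is $R'(JX',JY',JZ',JT')\circ\pi$ and whose $A$-contributions involve the twisted brackets $A_{\varphi X}\varphi Y$, $A_{\varphi Y}\varphi Z$, $A_{\varphi X}\varphi Z$, $A_{\varphi Z}\varphi T$, $A_{\varphi X}\varphi T$, $A_{\varphi Y}\varphi T$.

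Next I would simplify those twisted brackets using the hypothesis $A_X\varphi Y=A_{\varphi X}Y$ together with $\varphi^2X=-X$ for horizontal $X$, which follows from (\ref{ec1}) because $\eta(X)=g(X,\xi)=0$ whenever $X\in\Gamma(\mathcal H)$ (Proposition~\ref{3.2}(i)). Iterating the hypothesis once,
\[
A_{\varphi X}\varphi Y=A_{\varphi^2X}Y=-A_XY,
\]
and in the same way $A_{\varphi Y}\varphi Z=-A_YZ$, $A_{\varphi X}\varphi Z=-A_XZ$, $A_{\varphi Z}\varphi T=-A_ZT$, $A_{\varphi X}\varphi T=-A_XT$, $A_{\varphi Y}\varphi T=-A_YT$. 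Inside each of the three inner products the two sign changes cancel, so the $A$-terms of the $\varphi$-rotated identity agree term-by-term with those of the original.

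Finally, invoking $K_{3\varphi}$ on $M$ forces the two left-hand sides $R(X,Y,Z,T)$ and $R(\varphi X,\varphi Y,\varphi Z,\varphi T)$ to coincide. Subtracting the two O'Neill identities, the $A$-terms cancel and what remains is
\[
R'(X',Y',Z',T')=R'(JX',JY',JZ',JT')
\]
for arbitrary vector fields on $N$, since every vector field on $N$ has a horizontal basic lift on $M$. This is precisely $K_3$ on $N$. The only place that requires care is the sign bookkeeping in the reduction $A_{\varphi X}\varphi Y=-A_XY$ (and its five companions); once that is in hand, the argument is a mechanical analogue of the preceding $K_{2\varphi}$ proof, and I do not expect any serious obstacle.
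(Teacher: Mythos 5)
Your argument is correct and is precisely the ``analogous way'' the paper alludes to when it omits the proof: you apply O'Neill's horizontal curvature equation to both the quadruple $(X,Y,Z,T)$ and its $\varphi$-rotation, use the hypothesis $A_X\varphi Y=A_{\varphi X}Y$ together with $\varphi^2X=-X$ on $\mathcal H$ to reduce each twisted term $A_{\varphi X}\varphi Y$ to $-A_XY$ so that the $A$-contributions cancel in pairs, and then invoke $K_{3\varphi}$. The sign bookkeeping you flag is handled correctly, so nothing further is needed.
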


\section{The structure equation of a contact-complex Riemannian submersion}

The relationship between the codifferentials of the basic 2-forms  $
\Phi $, $ \hat{\Phi }$ and $ \Omega $ of the total space, a fibre
and the base space of a contact-complex Riemannian submersion $\pi:M
\rightarrow N$ will be established in what follows.

The codifferential of $\Phi $ on $ M $ is
\begin{eqnarray}\label{(3.1)}
 \delta \Phi(\tilde{X}) &=& -\sum_{i=1}^{n} \{ (\nabla_{X_{i}}\Phi)(X_{i}, \tilde{X}) + (\nabla_{\varphi X_{i}}\Phi)(\varphi X_{i}, \tilde{X}) \}\nonumber \\
&-&\sum_{j=1}^{m-n} \{ (\nabla_{V_{j}}\Phi)(V_{j}, \tilde{X}) +
(\nabla_{\varphi V_{j}}\Phi)(\varphi V_{j}, \tilde{X}) \} -
(\nabla_{\xi}\Phi)(\xi, \tilde{X})
\end{eqnarray}
with \[\{(X_{i},\varphi X_{i}); (V_{i},\varphi V_{i}, \xi) \}, 1\leq
i \leq n, 1 \leq j \leq m-n,\] a $\varphi$ - orthonormal local basis
in the tangent space, such that $(X_{i},\varphi X_{i})$ are
 basic and $ (V_{i},\varphi V_{i})$ are vertical. Recall that $\varphi
X_{i}, \varphi V_{i}$ are horizontal and vertical respectively,
since both distributions $ \mathcal{H}$ , $\mathcal{V}$ are
$\varphi$-invariant, and the Reeb vector field $\xi$ is vertical.

\begin{thrm}
\noindent {\rm \textbf{(The structure equation)}} Let $\pi:M
\rightarrow N$ be a contact-complex Riemannian submersion. The
following formula holds good:
\begin{equation}\label{codif2}
\delta \Phi(\tilde{X})=\delta'\Omega(X')+\hat{\delta}
\hat{\Phi}(V)+g(H,\varphi X)+\frac{1}{2}g(Tr B^{h}, V)
\end{equation}
where $\delta, \hat{\delta}, \delta' $  are the codifferential on
$M$, $N$ and a fibre (by respect to the induced metric). Here $H$ is
the mean curvature vector field of a fibre and $Tr B^{h} $ stands
for the trace of the restriction of the tensor $B$ to the
distribution $\mathcal{H}$.
\end{thrm}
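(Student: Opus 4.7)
The plan is to expand the right-hand side of (\ref{(3.1)}) using the identity $(\nabla_E\Phi)(E,\tilde X) = g(E,(\nabla_E\varphi)\tilde X)$ and to exploit the decomposition $\tilde X = X + V$ with $X \in \Gamma(\mathcal{H})$ and $V \in \Gamma(\mathcal{V})$. Since both sides of the structure equation are linear in $\tilde X$, it suffices to handle separately the two cases $\tilde X = X$ horizontal and $\tilde X = V$ vertical, and then to add the results.

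\emph{Case 1: $\tilde X = X$ horizontal.} I would choose the $X_i$ to be basic, $\pi$-related to a local $g'$-orthonormal $J$-adapted frame on $N$. By Proposition \ref{3.2}(ii), $h(\nabla_{X_i}\varphi)X$ is basic and $\pi$-related to $(\nabla'_{X'_i}J)X'$, so the horizontal-basis contribution to $\delta\Phi(X)$ pulls back exactly the corresponding formula for $\delta'\Omega(X')$ on $N$, producing the term $\delta'\Omega(X')\circ\pi$. The vertical-basis contributions from $\{V_j,\varphi V_j,\xi\}$ are then rewritten using $(\nabla_E\varphi)X = \nabla_E(\varphi X) - \varphi\nabla_E X$, the $\varphi$-invariance of $\mathcal{V}$, and the O'Neill duality $g(T_E E', Y) = -g(E', T_E Y)$ for vertical $E,E'$ and horizontal $Y$. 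After cancellation, the vertical-basis contributions collapse to $-g\!\left(\sum_\alpha T_{E_\alpha} E_\alpha,\, \varphi X\right) = -g(H,\varphi X)$, where the sum is over the vertical orthonormal frame $\{E_\alpha\}=\{V_j,\varphi V_j,\xi\}$.

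\emph{Case 2: $\tilde X = V$ vertical.} The vertical-basis contributions, viewed intrinsically on the fibre, reproduce verbatim the formula for $\hat\delta\hat\Phi(V)$: the induced Levi-Civita connection $\hat\nabla$ on the fibre satisfies $\hat\nabla_U W = v\nabla_U W$ for vertical $U,W$, and the restriction of $(\varphi,\xi,\eta,g)$ to the fibre is precisely its almost contact metric structure. The horizontal-basis contributions
\[
-\sum_i\Bigl\{g\!\left(X_i,(\nabla_{X_i}\varphi)V\right)+g\!\left(\varphi X_i,(\nabla_{\varphi X_i}\varphi)V\right)\Bigr\}
\]
I would expand using (\ref{2}) and Proposition \ref{p2}(ii): each summand recombines into a pairing of the form $g(B(X_i,X_i),V) = g(A_{X_i}\varphi X_i - A_{\varphi X_i}X_i,\,V)$, and summing over $i$ produces $\tfrac12 g(\mathrm{Tr}\,B^h, V)$, with the factor $\tfrac12$ accounting for the double counting in the $\varphi$-adapted frame (each horizontal plane is represented by both $X_i$ and $\varphi X_i$).

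The main obstacle will be the bookkeeping in Case 2: reconciling the four terms in the definition (\ref{2}) of $B$ with the four terms coming from $g(X_i,(\nabla_{X_i}\varphi)V) + g(\varphi X_i,(\nabla_{\varphi X_i}\varphi)V)$ requires using $\varphi^2 = -I$ on $\mathcal{H}$, the skew-symmetry $A_XY = -A_YX$ on horizontal vectors, and $\eta(X)=0$ for $X\in\mathcal{H}$, in order to track the signs correctly and extract the factor $\tfrac12$. A subsidiary subtlety is verifying that the term $(\nabla_\xi\Phi)(\xi,\tilde X)$ routes correctly to $\hat\delta\hat\Phi(V)$ in Case 2 (via the inclusion of $\xi$ in the vertical frame) and to the mean curvature term in Case 1 (contributing $T_\xi\xi$ to $H$), with no leakage between the two cases.
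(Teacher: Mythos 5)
Your proposal follows essentially the same route as the paper's proof: decompose $\tilde X=X+V$, identify the horizontal-frame/horizontal-argument block with $\delta'\Omega(X')\circ\pi$ via Proposition \ref{3.2}(ii), collapse the vertical-frame/horizontal-argument block to the mean curvature term, recognize the horizontal-frame/vertical-argument block as $\tfrac12 g(\mathrm{Tr}\,B^{h},V)$ via Proposition \ref{p2}(ii), and restrict the purely vertical block to $\hat\delta\hat\Phi(V)$. Your explanation of the factor $\tfrac12$ (namely $B(\varphi X_i,\varphi X_i)=B(X_i,X_i)$, so the $\varphi$-adapted trace double counts each contribution) is exactly the mechanism the paper uses, and your routing of the $\xi$-terms ($T_\xi\xi$ feeding into $H$ in Case 1, $\xi$ joining the vertical frame of the fibre in Case 2) also matches.

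The one point that must be corrected is the sign of the mean curvature term in Case 1: you assert the vertical-frame contribution collapses to $-g(H,\varphi X)$, whereas the theorem and the paper's computation give $+g(H,\varphi X)$. Concretely, for a vertical frame element $E$ and horizontal $X$ one has
\[
(\nabla_E\Phi)(E,X)=g(E,(\nabla_E\varphi)X)=g(E,T_E\varphi X)+g(\varphi E,T_E X)=-g(T_EE,\varphi X)-g(T_E\varphi E,X);
\]
the second terms cancel pairwise over $\{V_j,\varphi V_j\}$ by the symmetry $T_UW=T_WU$ and vanish for $E=\xi$, and then the overall minus sign in the codifferential $\delta\Phi(X)=-\sum_E(\nabla_E\Phi)(E,X)$ converts $-g(T_EE,\varphi X)$ into $+g(T_EE,\varphi X)$, summing to $+g(H,\varphi X)$. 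As written, your Case 1 would yield a formula inconsistent with (\ref{codif2}); it appears you dropped the outer minus sign of the codifferential after applying the duality $g(E,T_E\varphi X)=-g(T_EE,\varphi X)$. The fix is only this sign; the rest of the argument goes through as you outline it.
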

\begin{proof}
If $\tilde{X}$ is an arbitrary vector field on $M$, then there is an
unique decomposition $\tilde{X}= X+V$ where $X$ is horizontal and
$V$ is vertical, so that $ X=h \tilde{X}$ and $ V=v \tilde{X}$. One
calculates next $\delta\Phi(X)$ and $\delta\Phi(V)$. Using the
formula
\[(\nabla_{\tilde{X}}\Phi)(\tilde{Y},\tilde{Z})=-g((\nabla_{\tilde{X}}\varphi)\tilde{Y},
\tilde{Z})\] it follows from (\ref{(3.1)}) that
\begin{eqnarray*}
\delta \Phi(X) &=& \sum_{i=1}^{n} \{ g((\nabla_{X_{i}}\varphi)X_{i}, X) + g((\nabla_{\varphi X_{i}}\varphi)\varphi X_{i}, X) \}\nonumber \\
&&+\sum_{j=1}^{m-n} \{ g((\nabla_{V_{j}}\varphi)V_{j}, X) +
g((\nabla_{\varphi V_{j}}\varphi)\varphi V_{j}, X) \} + g(
(\nabla_{\xi}\varphi)\xi, X).
\end{eqnarray*}

The generic term in the first summation can be written as
\[g(\nabla_{X_{i}}\varphi X_{i} - \varphi \nabla_{X_{i}} X_{i} ,
X)-g( \nabla_{\varphi X_{i}} X_{i}, X)-g(\varphi \nabla_{\varphi
X_{i}} \varphi X_{i}, X)
\]
or
\[g((h \nabla_{X_{i}}\varphi X_{i}, X) - g(h \varphi
\nabla_{X_{i}} X_{i} , X)-g( h \nabla_{\varphi X_{i}} X_{i}, X)-g(h
\varphi \nabla_{\varphi X_{i}} \varphi X_{i}, X).
\]

Considering now basic, local vector fields $X_{i}, \varphi X_{i}, X$
on $M$, $\pi$-related to $ X'_{i}, \varphi X'_{i}, X' $ on $N$, we
obtain
\[
g'((\nabla'_{X'_{i}}J X'_{i}, X') - g'(J \nabla'_{X'_{i}} X'_{i} ,
X')-g'( \nabla'_{J X'_{i}} X'_{i}, X')-g'(J \nabla'_{J X'_{i}} J
X'_{i}, X').
\]

Therefore the first sum in the expression of $\delta \Phi(X) $ can
be written as
\[ \sum_{i=1}^{n}\{ g'((\nabla'_{X'_{i}}J )X'_{i},
X')o \pi + g'((\nabla'_{J X'_{i}}J) J X'_{i}, X')o \pi \}\] or,
equivalently, $\delta'\Omega (X')o \pi,$ where $\Omega$ is the
fundamental form on the base space $ N$.

Next, the generic summand in the expression of $\delta \Phi(X) $
(the second sumation) is \[g(\nabla_{V_{j}}\varphi V_{j} - \varphi
\nabla_{ V_{j}}V_{j}, X) - g(\nabla_{\varphi V_{j}} V_{j}+ \varphi
\nabla_{\varphi V_{j}} \varphi V_{j}, X) \] or
\[g(\nabla_{V_{j}}\varphi V_{j} -
\nabla_{ \varphi V_{j}}V_{j}, X) - g(\varphi (\nabla_{ V_{j}} V_{j}+
\nabla_{\varphi V_{j}} \varphi V_{j}), X).
\]

The first term above vanishes as $g([ V_{j}, \varphi V_{j}], X)=0$,
since $[ V_{j}, \varphi V_{j}]$ is tangent to the fibres. Hence the
second sum in the expression of $\delta \Phi(X) $ can be written as
\[ \sum_{i=1}^{n}g(\nabla_{ V_{j}} V_{j}+
\nabla_{\varphi V_{j}} \varphi V_{j},\varphi  X),\] or,
equivalently, $g(H, \varphi X)-g(\nabla_{\xi}\xi,\varphi X)$, where
$H$ is the mean curvature vector field of the fibres.

One finally gets therefore
\begin{equation}\label{codif3}
 \qquad \delta \Phi(X)= g(H, \varphi X) + \delta' \Omega (X') o
 \pi.
\end{equation}

On another hand, we have:
\begin{eqnarray*} \qquad \delta \Phi(V) &=& \sum_{i=1}^{n} \{ g((\nabla_{X_{i}}\varphi)X_{i}, V) + g((\nabla_{\varphi X_{i}}\varphi)\varphi X_{i}, V) \}\nonumber \\
&+&\sum_{j=1}^{m-n} \{ g((\nabla_{V_{j}}\varphi)V_{j}, V) +
g((\nabla_{\varphi V_{j}}\varphi)\varphi V_{j}, X) \} + g(
(\nabla_{\xi}\varphi)\xi, V).
\end{eqnarray*}

For the generic summand in the first summation we obtain
\[g((v \nabla_{X_{i}}\varphi X_{i} - v \varphi \nabla_{X_{i}} X_{i} ,
V)-g( v \nabla_{\varphi X_{i}} X_{i}+ v \varphi \nabla_{\varphi
X_{i}} \varphi X_{i}, V)
\]
\[=g( A_{X_{i}}\varphi X_{i} - A_{\varphi X_{i}} X_{i} , V)- g(
\varphi A_{ X_{i}} X_{i}+ \varphi A_{\varphi X_{i}} \varphi X_{i},
V)\] by the definition of $A$ and the fact that $\varphi$ and the
projection $v$ commute. The skew-symmetry of $A$ gives the vanishing
of the last parenthesis.

Denoting by $ B^{h}$ the restriction of $B$ to the horizontal
distribution, it follows that \[g( A_{X_{i}}\varphi X_{i} -
A_{\varphi X_{i}} X_{i} , V)\] is the generic summand in
 $ \frac{1}{2}g(Tr B^{h}, V)$, where $Tr B^{h}$ is the trace of
$ B^{h}$. Compute now the second sum in $\delta \Phi(V) $ as the
restriction of $\delta \Phi(V) $ to the vertical distribution
$\mathcal{V}$. Distinguish by a hat the induced objects on fibres
(which are $\varphi$-invariant submanifolds). It follows that
\begin{eqnarray*}
\hat{\delta }\hat{\Phi}(V) = -\sum_{j=1}^{m-n} \{
(\hat{\nabla}_{V_{j}}\hat{\Phi})(V_{j}, V) + (\hat{\nabla}_{\varphi
V_{j}}\hat{\Phi})(\varphi V_{j}, V) \} -
(\hat{\nabla}_{\xi}\hat{\Phi})(\xi, V).
\end{eqnarray*}

The first summand expands
\begin{eqnarray*}(\hat{\nabla}_{V_{j}}\hat{\Phi})(V_{j}, V)&=& -g((\hat{\nabla}_{V_{j}}\varphi)V_{j}, V)
=-g(\hat{\nabla}_{V_{j}}\varphi V_{j}, V)+g(\varphi \hat{\nabla}_{V_{j}}V_{j}, V)\nonumber \\
&=&-g(\nabla_{V_{j}}\varphi V_{j}-\varphi \nabla_{V_{j}}V_{j}, V)
=-g((\nabla_{V_{j}}\varphi ) V_{j}, V)\nonumber \\
&=& (\nabla_{V_{j}}\Phi )(V_{j}, V).
\end{eqnarray*}

In an analogous way, it follows that $\delta \Phi(V) $ restricts to
the fibres to  $\hat{\delta} \hat{\Phi}(V) $.  Therefore
\begin{equation}\label{codif4}
 \qquad  \delta \Phi(V)=\hat{\delta} \hat{\Phi}(V) + \frac{1}{2}g(Tr B^{h}, V)
 \end{equation}

From (\ref{codif3}) and  (\ref{codif4}) one gets (\ref{codif2}),
where $\delta \Phi(\tilde{X})=\delta \Phi(X)+\delta \Phi(V).$
\end{proof}

In what follows this formula will be used in the study of almost
cosymplectic manifolds with K\"{a}hler fibres introduced and studied
by Z. Olszak \cite{OLS}.

\section{Contact-complex Riemannian
submersions from almost cosymplectic manifolds with K\"{a}hler
leaves}

Recall that an  almost contact metric manifold $M$ is almost
cosymplectic if $\eta$ and the fundamental form
 $\Phi$ are closed, i.e.
\begin{equation}\label{e-1}
d\Phi=0, d\eta=0.
 \end{equation}

The identity $d\eta=0$ shows that the distribution $D=\{X\in TM
\mid\eta(X)=0\}$ is integrable and its (maximal) integral manifolds
are hypersurfaces in $M$. The restrictions of $\Phi$  and $\eta$ to
the associated foliation are closed forms, so that any leave  is an
almost K\"{a}hler submanifold.

Denote by $A^{*}$ the endomorphism of the tangent space given by
\begin{equation}\label{e0}
 \qquad \qquad  A^{*} X=-\nabla_{X} \xi,\ \forall X \in \Gamma (TM).
\end{equation}

\begin{prop} \cite{OLS} The tensor $A^{*}$ satisfies on the following formulae
\begin{equation}\label{e1}
 g(A^{*} X, Y)= g( X, A^{*}Y),
 \end{equation}
 \begin{equation}\label{e2}
A^{* }\varphi+\varphi A^{*} =0,\ A^{*}\xi =0,\ \eta o A^{*} =0,
\end{equation}
\begin{equation}\label{e3}
(\nabla_{X}\varphi) Y=- g(\varphi A^{*} X, Y)\xi +\eta(Y)\varphi
A^{*}X.
\end{equation}
\end{prop}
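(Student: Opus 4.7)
All three formulae will be extracted from the defining closedness conditions $d\eta=0$ and $d\Phi=0$, together with the algebraic identities $\eta(\xi)=1$, $\varphi\xi=0$, $\eta\circ\varphi=0$ coming from the almost contact metric structure.

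For (\ref{e1}), I would expand $2d\eta(X,Y)=(\nabla_X\eta)Y-(\nabla_Y\eta)X$ using that $\nabla$ is torsion-free. Since $\eta(\cdot)=g(\cdot,\xi)$, this becomes $-g(Y,A^{*}X)+g(X,A^{*}Y)$, so $d\eta=0$ is precisely the claimed symmetry.

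For (\ref{e2}), two of the three identities drop out immediately. Differentiating $g(\xi,\xi)=1$ gives $g(\nabla_X\xi,\xi)=0$, i.e.\ $\eta\circ A^{*}=0$; then the symmetry just proved yields $g(A^{*}\xi,Y)=g(\xi,A^{*}Y)=-\eta(A^{*}Y)=0$ for all $Y$, forcing $A^{*}\xi=0$. For the anti-commutation, I would first derive the auxiliary identity $(\nabla_X\varphi)\xi=\varphi A^{*}X$ by differentiating $\varphi\xi=0$. Then, plugging $Y=\xi$ into the cyclic relation
\[\sum_{\mathrm{cyc}(X,Y,Z)}(\nabla_X\Phi)(Y,Z)=0\]
coming from $d\Phi=0$, and using the skew-adjointness of $\nabla_X\varphi$ with respect to $g$ (itself a consequence of $\varphi$ being skew-adjoint), isolates an equation of the shape $(\varphi A^{*}+A^{*}\varphi)X=(\nabla_\xi\varphi)X$. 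A second, independent use of the same cyclic identity (this time with $X=\xi$, together with $A^{*}\xi=0$) forces $\nabla_\xi\varphi=0$; combining the two gives $\varphi A^{*}+A^{*}\varphi=0$.

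For (\ref{e3}), once (\ref{e2}) is available, I would compute the $\xi$- and $D$-components of $(\nabla_X\varphi)Y$ separately, where $D=\ker\eta$. The $\xi$-component is
\[\eta\bigl((\nabla_X\varphi)Y\bigr)=g\bigl(\xi,(\nabla_X\varphi)Y\bigr)=-g\bigl((\nabla_X\varphi)\xi,Y\bigr)=-g(\varphi A^{*}X,Y),\]
by skew-adjointness together with the auxiliary identity. For the remaining component, I would split $Y=\eta(Y)\xi+(Y-\eta(Y)\xi)$: the first summand contributes $\eta(Y)\,\varphi A^{*}X$ via $(\nabla_X\varphi)\xi=\varphi A^{*}X$, while for $Y\in D$ I would show that the $D$-component of $(\nabla_X\varphi)Y$ vanishes, by re-running the cyclic identity from $d\Phi=0$ and using the anti-commutation $\varphi A^{*}+A^{*}\varphi=0$ to cancel the cross terms. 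Assembling these pieces yields exactly the formula in (\ref{e3}). The main obstacle I anticipate is the step in (\ref{e2}) that upgrades the cheap relation $\nabla_\xi\varphi=\varphi A^{*}+A^{*}\varphi$ (which is immediate from $d\Phi=0$ alone) to the stronger $\nabla_\xi\varphi=0$; this is the point where one really needs to exploit $d\eta=0$ and $d\Phi=0$ in tandem, and everything else is then bookkeeping.
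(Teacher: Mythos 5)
The paper gives no proof of this proposition: it is quoted verbatim from Olszak's paper \cite{OLS}, so there is nothing internal to compare against. Judged on its own merits, your treatment of (\ref{e1}), of $\eta\circ A^{*}=0$ and of $A^{*}\xi=0$ is fine, and your computation $(\nabla_X\varphi)\xi=\varphi A^{*}X$ and the relation $\nabla_\xi\varphi=\varphi A^{*}+A^{*}\varphi$ from the cyclic identity are correct. But there are two genuine gaps, one of which is fatal.

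The fatal one is (\ref{e3}). You announce that all three formulae follow from $d\eta=0$ and $d\Phi=0$ together with the algebraic identities of the structure, and nowhere in your argument do you invoke the hypothesis that the leaves of $D=\ker\eta$ are K\"ahler (as opposed to merely almost K\"ahler, which is all that $d\Phi=0$ gives). This cannot work, because (\ref{e3}) \emph{is} Olszak's characterization of the K\"ahler-leaves condition among almost cosymplectic manifolds. Concretely: let $N$ be almost K\"ahler but not K\"ahler and set $M=N\times\mathbb{R}$ with the product structure, $\xi=\partial_t$, $\eta=dt$. Then $d\eta=0$, $d\Phi=0$, and $A^{*}=0$ since $\xi$ is parallel; formula (\ref{e3}) would then force $\nabla\varphi=0$, i.e.\ $N$ K\"ahler, a contradiction. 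So your step ``for $Y\in D$ the $D$-component of $(\nabla_X\varphi)Y$ vanishes, by re-running the cyclic identity from $d\Phi=0$'' is exactly where the argument must fail: on an almost K\"ahler leaf $\nabla J\neq 0$ in general, and to kill that component you must feed in the integrability of $\varphi|_{D}$ on the leaves (e.g.\ via the standard identity expressing $2g((\nabla_X J)Y,Z)$ in terms of $d\omega$ and the Nijenhuis tensor). The second, smaller gap is in (\ref{e2}): your ``second, independent use of the same cyclic identity with $X=\xi$'' is not independent — it reproduces the same relation $\nabla_\xi\varphi=\varphi A^{*}+A^{*}\varphi$ (as does $\mathcal{L}_\xi\Phi=0$), so the upgrade to $\nabla_\xi\varphi=0$, which you yourself flag as the crux, is left unproved; it is true for almost cosymplectic manifolds but requires a separate argument (Olszak's 1981 paper). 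As written, the proposal proves (\ref{e1}) and parts of (\ref{e2}) only.
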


It is trivial that a cosymplectic manifold is almost cosymplectic
with K\"{a}hler leaves. Also, the tensor field $ A^{*}$ vanishes in
this case. Olszak  gave examples of almost cosymplectic manifolds
with K\"{a}hler leaves that are not cosymplectic (see \cite{OLS}).
He proved as well that if an almost cosymplectic manifold with
K\"{a}hler leaves has vanishing tensor $ A^{*} $, then it is
actually cosymplectic.

We recall that an almost contact structure $(\phi,\xi,\eta)$ is
normal if and only if the next four tensors $N_1$, $N_{2}$, $N_{3}$
and $N_{4}$ vanish (see \cite{BL}):
\[
N_1(X,Y)=[\phi,\phi](X,Y)+2d\eta(X,Y)\xi,
\]
\[
N_{2}(X, Y)=(L_{\varphi X}\eta)Y-(L_{\varphi Y}\eta)X,
\]
\[
N_{3}(X)=(L_{\xi}\varphi)X,
\]
\[
N_{4}(X)=(L_{\xi}\eta)X,
\]
where $L_Z$ denotes the Lie derivative with respect to $Z$. It is
known that the vanishing of $N_{1}$ on contact metric manifolds
gives the vanishing of $N_{2}$, $N_{3}$ and $N_{4}$, too. The
converse is not true, in general (see \cite{BL} for details). The
almost cosymplectic manifold with  K\"{a}hler leaves have,
generally, nonvanishing tensors $N_{1}$ and $N_{3}$, but  $N_{2}$
and $N_{4}$  are all zero, as we can see in what follows.

\begin{prop}
 Let $M$  be an almost cosymplectic manifold with  K\"{a}hler leaves. Then $N_{2}$ and $N_{4}$
 vanish. Moreover $N_{3}$ vanishes if and only if $M$ is a cosymplectic manifold.
\end{prop}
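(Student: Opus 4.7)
The plan is to treat the three tensors $N_{2}$, $N_{3}$, $N_{4}$ separately, exploiting the hypotheses in increasing order of strength: the closedness of $\eta$ handles the first two, while the closedness of $\Phi$ together with the Kähler-leaves identities (\ref{e0})--(\ref{e3}) is needed for $N_{3}$.

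First I would dispose of $N_{4}$. Writing $(L_{\xi}\eta)(X)=\xi\eta(X)-\eta([\xi,X])$ and combining it with the standard identity
\[
2\,d\eta(\xi,X)=\xi\eta(X)-X\eta(\xi)-\eta([\xi,X]),
\]
the fact that $\eta(\xi)=1$ is constant and $d\eta=0$ immediately yields $N_{4}(X)=0$. For $N_{2}$, the same manoeuvre with $\xi$ replaced by $\varphi X$ reduces each summand $(L_{\varphi X}\eta)Y$ to $Y\eta(\varphi X)$, which vanishes because $\eta\circ\varphi=0$. Hence $N_{2}\equiv 0$ as well, using only $d\eta=0$ and the bare almost contact identities.

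For $N_{3}(X)=[\xi,\varphi X]-\varphi[\xi,X]$ I would rewrite everything via the Levi-Civita connection, using that it is torsion-free. Expanding gives
\[
N_{3}(X)=(\nabla_{\xi}\varphi)X+A^{*}(\varphi X)-\varphi A^{*}X.
\]
Next I would simplify the right-hand side in two steps. The middle and last terms combine through (\ref{e2}) (the anticommutation $A^{*}\varphi+\varphi A^{*}=0$) into $-2\varphi A^{*}X$. The term $(\nabla_{\xi}\varphi)X$ is killed by specializing (\ref{e3}) with $X\mapsto\xi$ and invoking $A^{*}\xi=0$, again from (\ref{e2}). This yields the clean identity $N_{3}(X)=-2\varphi A^{*}X$.

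From here the equivalence is quick. If $M$ is cosymplectic then $\nabla\varphi=0$ and $A^{*}=0$, so $N_{3}=0$ trivially. Conversely, $N_{3}=0$ forces $\varphi A^{*}X=0$ for every $X$; applying $\varphi$ once more and using $\varphi^{2}=-I+\eta\otimes\xi$ together with $\eta\circ A^{*}=0$ (from (\ref{e2})) gives $A^{*}X=0$. Olszak's result recalled just before the statement then ensures that $M$ is cosymplectic. The one step that requires real care is the $N_{3}$ computation; the main obstacle is simply to transform the Lie brackets into covariant derivatives with the correct signs and then recognise that (\ref{e2}) and (\ref{e3}) collapse the expression onto a single multiple of $\varphi A^{*}X$.
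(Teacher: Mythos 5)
Your proof is correct and follows essentially the same route as the paper: the key computation $N_{3}(X)=2A^{*}\varphi X=-2\varphi A^{*}X$ via the torsion-free connection, the anticommutation $A^{*}\varphi+\varphi A^{*}=0$, and Olszak's result that $A^{*}=0$ forces cosymplecticity is exactly the paper's argument, and you make explicit the step $(\nabla_{\xi}\varphi)X=0$ that the paper leaves implicit. The paper disposes of $N_{2}$ and $N_{4}$ by citing (\ref{e1})--(\ref{e3}) without detail, whereas you derive them directly from $d\eta=0$; both are routine and equivalent.
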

\begin{proof}
The vanishing of $N_{2}$ and $N_{4}$ follows easily using
(\ref{e1}), (\ref{e2}) and (\ref{e3}).

On the other hand, by direct computation we derive
\begin{eqnarray*}  N_{3}(X)&=&(L_{\xi}\varphi)X=[\xi,\varphi X]-\varphi[\xi, X] \\
&=&\nabla_{\xi}\varphi X-\nabla_{\varphi X}\xi-\varphi \nabla_{\xi} X+\varphi \nabla_{X} \xi \\
&=&A^{*} \varphi X-\varphi A^{*}X \\
&=&2A^{*} \varphi X
\end{eqnarray*}
and the conclusion follows.
\end{proof}

\begin{prop} Let $M(\varphi,\xi,\eta,g)$ be an almost cosymplectic manifold with  K\"{a}hler leaves. Then its fundamental form
is harmonic.
\end{prop}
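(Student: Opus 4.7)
The almost cosymplectic assumption already gives $d\Phi=0$, so the task reduces to showing $\delta\Phi=0$. My plan is a direct pointwise computation using a local orthonormal frame and Olszak's identity (\ref{e3}) for $(\nabla_X\varphi)Y$; I would not route through the structure equation of the previous section, which concerns submersions rather than the ambient manifold and in any case is not needed here.

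Starting from $(\nabla_X\Phi)(Y,Z)=-g((\nabla_X\varphi)Y,Z)$ (already used in the proof of the structure equation), for any local orthonormal frame $\{e_i\}$ one has $\delta\Phi(\tilde X)=\sum_i g((\nabla_{e_i}\varphi)e_i,\tilde X)$. Substituting (\ref{e3}) into each summand contracts $\sum_i(\nabla_{e_i}\varphi)e_i$ into two clean pieces: a multiple of $\xi$ whose scalar coefficient is $-\mathrm{tr}(\varphi A^{*})$, and the vector $\varphi A^{*}\xi$ (this second piece appears after recognising $\sum_i\eta(e_i)e_i=\xi$, since $\xi$ is the metric dual of $\eta$).

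Both pieces vanish by (\ref{e2}): the second directly, since $A^{*}\xi=0$; the first because the anticommutation $\varphi A^{*}+A^{*}\varphi=0$ forces $\mathrm{tr}(\varphi A^{*})=-\mathrm{tr}(A^{*}\varphi)=-\mathrm{tr}(\varphi A^{*})$, hence zero. Thus $\delta\Phi\equiv 0$, and combined with $d\Phi=0$ this gives harmonicity. No genuine obstacle is visible; the only mild caution is bookkeeping the sign convention for $\delta$ used in (\ref{(3.1)}). In fact, what really drives the proposition is purely algebraic: the trace-free, $\xi$-annihilating nature of $\varphi A^{*}$, and Olszak's explicit formula (\ref{e3}) for the covariant derivative of $\varphi$.
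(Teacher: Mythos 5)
Your proof is correct and follows essentially the same route as the paper's: a direct local-frame computation of $\delta\Phi$, substitution of Olszak's formula (\ref{e3}), and the vanishing of the resulting terms via $A^{*}\xi=0$ and the anticommutation in (\ref{e2}). The only cosmetic difference is that you work with an arbitrary orthonormal frame and kill the trace $\mathrm{tr}(\varphi A^{*})$ by the cyclic trace identity, whereas the paper uses a $\varphi$-adapted frame $\{e_i,\varphi e_i,\xi\}$ and cancels the summands pairwise.
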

\begin{proof}
Since $\Phi$ is closed, it remains to prove that it is co-closed, as
well.

Let  $\{e_{i},\varphi e_{i},\xi\}$ be a local $\varphi$ - Hermitian
frame on $M$, $(1 \leq i \leq m).$ It follows that
\begin{eqnarray*} \delta \Phi(\tilde{X}) &=& -\sum_{i=1}^{m}  (\nabla_{e_{i}}\Phi)(e_{i}, \tilde{X})
-\sum_{i=1}^{m}  (\nabla_{\varphi e_{i}}\Phi)(\varphi e_{i}, \tilde{X})-(\nabla_{\xi}\Phi)(\xi, \tilde{X})
\end{eqnarray*}
which can be written
\begin{eqnarray*} \delta \Phi(\tilde{X}) &=& \sum_{i=1}^{m} g( (\nabla_{e_{i}}\varphi)e_{i}, \tilde{X})
+ \sum_{i=1}^{m}  g((\nabla_{\varphi e_{i}}\varphi)\varphi e_{i},
\tilde{X})+g((\nabla_{\xi}\varphi)\xi, \tilde{X}).
\end{eqnarray*}

The last summand is obviously zero. Taking into account (\ref{e3}),
the generic summand in the first summation takes the form
\[ g(
(\nabla_{e_{i}}\varphi)e_{i}, \tilde{X})=-g(\varphi
A^{*}(e_{i}),e_{i})g(\xi,\tilde{X}).
\]

The second term is \[ g((\nabla_{\varphi e_{i}}\varphi)\varphi
e_{i}, \tilde{X})=-g(\varphi A^{*}(\varphi e_{i}),\varphi
e_{i})g(\xi,\tilde{X}).\]

Using the properties of $A^{*}$, it follows that  $\delta
\Phi(\tilde{X})=0,\ \forall \tilde{X} \in \Gamma(TM).$ Finally
\[\triangle \Phi=(d \delta+\delta d)\Phi=0\]
and the result follows.
\end{proof}

\begin{thrm}
Let $\pi:M \rightarrow N$ be a contact-complex Riemannian
submersion. Suppose that $M$ is almost cosymplectic with K\"{a}hler
leaves. Then\\
i. The base space $N$ is K\"{a}hler if and only if the O'Neill
tensor $A$ satisfies $A_{X}\xi=0$ for any $X\in\mathcal{H}$.\\
ii. The fundamental form $\Omega$ of the base is harmonic if and
only if $Tr B^{h}=0.$
\end{thrm}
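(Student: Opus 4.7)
The plan is to apply the structure equation from the preceding theorem under the standing harmonicity of $\Phi$. Because $M$ is almost cosymplectic with K\"ahler leaves, the previous proposition gives $\delta\Phi=0$. Decomposing $\tilde X = X + V$ into horizontal and vertical parts in the structure equation and then specializing successively to $\tilde X \in \mathcal H$ and $\tilde X \in \mathcal V$ produces two decoupled identities:
\[
\delta'\Omega(X') = -g(H,\varphi X), \qquad \hat\delta\hat\Phi(V) = -\tfrac{1}{2} g(Tr B^{h}, V),
\]
which isolate the quantities $H$ and $Tr B^{h}$ of interest.

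For part (i), the base $N$ is K\"ahler iff $\nabla'J\equiv 0$, and Proposition \ref{3.2}(ii) recasts this as $h(\nabla_{X}\varphi)Y = 0$ for all basic $X,Y$. Substituting Olszak's identity (\ref{e3}), valid on $M$ by hypothesis, with horizontal $Y$ (so $\eta(Y)=0$) gives $(\nabla_{X}\varphi)Y = -g(\varphi A^{*}X, Y)\,\xi$. Taking the horizontal component and using $A_{X}\xi = h\nabla_{X}\xi = -hA^{*}X$ translates the K\"ahler condition on the base exactly into $A_X\xi = 0$ for all horizontal $X$.

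For part (ii), I would first note that $\Omega$ is automatically closed: by Proposition \ref{3.2}(iii), $\pi^{*}\Omega = \Phi$, so $\pi^{*}(d\Omega) = d\Phi = 0$, and the injectivity of $\pi^{*}$ on forms (from $\pi$ being a surjective submersion) forces $d\Omega = 0$. Harmonicity of $\Omega$ thus reduces to $\delta'\Omega = 0$. To connect this to $Tr B^{h}$, I would verify that each fibre inherits the almost-cosymplectic-with-K\"ahler-leaves property: $\mathcal{V}$ and $\mathcal{H}$ are $\varphi$-invariant and $\xi\in\Gamma(\mathcal V)$ (Proposition \ref{3.2}(i)), the restrictions of $\Phi$ and $\eta$ to a fibre are closed, and the leaves of the induced kernel distribution on a fibre are the intersections of that fibre with the ambient K\"ahler leaves of $M$, hence $\hat\varphi$-invariant submanifolds of K\"ahler manifolds and therefore K\"ahler themselves. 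By the preceding proposition applied fibrewise, $\hat\delta\hat\Phi = 0$, so the second identity above reduces the vanishing of $Tr B^{h}$ to a tautology, and coupled with the horizontal identity this yields the asserted equivalence between harmonicity of $\Omega$ and $Tr B^{h}=0$.

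The main technical obstacle is the inheritance step for part (ii): one must carefully check that the induced almost contact metric structure on the fibres really satisfies both the almost cosymplectic condition and the K\"ahler-leaves condition, so that the preceding proposition applies fibrewise and $\hat\delta\hat\Phi$ drops out of the vertical identity. A secondary subtlety in part (i) is ensuring that the horizontal projection of the right-hand side of (\ref{e3}) matches $-A_X\xi$ with no residue from the $\eta(Y)\varphi A^{*}X$ term, which requires $Y$ to be horizontal in order to cleanly annihilate that contribution.
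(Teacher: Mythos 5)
Your part (i) has a genuine gap. After substituting Olszak's identity (\ref{e3}) with horizontal $Y$ you correctly obtain $(\nabla_{X}\varphi)Y=-g(\varphi A^{*}X,Y)\,\xi$, but this vector is a multiple of $\xi$, which is vertical by Proposition \ref{3.2}(i). Hence its horizontal component vanishes identically, whatever $A^{*}X$ is: the scalar $g(\varphi A^{*}X,Y)$ that carries the information about $A_{X}\xi$ sits in the coefficient of $\xi$ and is annihilated wholesale by the projection $h$. Your reduction therefore shows $h(\nabla_{X}\varphi)Y=0$ unconditionally and never produces the condition $A_{X}\xi=0$; the claimed ``translation'' of the K\"ahler condition into $A_{X}\xi=0$ is a non sequitur. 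The paper avoids this by going through the normality tensor and Proposition \ref{3.2}(iv) rather than \ref{3.2}(ii): there the relevant expression $(\nabla_{X}\varphi)Y-(\nabla_{\varphi X}\varphi)\varphi Y$ reduces, via (\ref{e3}), to $-2g(\nabla_{X}\xi,\varphi Y)\,\xi$, and the condition $A_{X}\xi=h\nabla_{X}\xi=0$ is extracted from the vanishing of the $\xi$-coefficient --- i.e.\ precisely from the component that your horizontal projection discards.

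In part (ii) your treatment of the vertical identity (inheritance of the almost cosymplectic structure with K\"ahler leaves by the fibres, hence $\hat{\delta}\hat{\Phi}=0$ and $Tr\,B^{h}=0$) matches the paper, and is in fact spelled out more carefully than there; but you never establish $H=0$. Since the two identities you extract from the structure equation (\ref{codif2}) are decoupled --- $Tr\,B^{h}$ appears only in the vertical one and $\delta'\Omega$ only in the horizontal one --- the asserted equivalence between harmonicity of $\Omega$ and $Tr\,B^{h}=0$ can only be closed by showing that $\delta'\Omega(X')=-g(H,\varphi X)$ vanishes, i.e.\ that the fibres are minimal. The paper supplies exactly this ingredient: a $\varphi$-invariant submanifold of an almost cosymplectic manifold tangent to the characteristic vector field $\xi$ is minimal. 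Without that step your horizontal identity is left hanging and the equivalence does not follow.
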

\begin{proof}
i. From Proposition \ref{3.2} one has
\[(\pi^{*}N_{j})(X,Y)=N_{1}(X,Y)\]
for any basic vector fields $X,Y$ on $M$.

One shows next that $ N_{1}(X,Y)=0$ for any basic vector fields
$X,Y$. It is known that the vanishing of  $ N_{1}$ is equivalent to
the identity \[(\nabla_{X}\varphi)Y -(\nabla_{\varphi
X}\varphi)\varphi Y + \eta(Y) \nabla_{\varphi X}\xi=0.\]

But the last term vanishes as $Y$ is basic. Using (\ref{e3}), the
last equation becomes \[g(\varphi \nabla_{X}\xi, Y)- g(\varphi
\nabla_{\varphi X}\xi,\varphi Y)=0\] or, equivalently, $(
\nabla_{X}\xi, \varphi Y) =0.$ Taking into account that $A_{X}\xi=h
\nabla_{X}\xi$ and the $\varphi$-invariance of $\mathcal{H}$, the
assertion follows as
\[N_{1}(X,Y)=N_{J}(X',Y'),\] with $X, Y$ basic vector fields on $M$, $\pi$-related to $X',Y'$ on
$N$.\\
ii. It is known that a $\varphi$-invariant submanifold of an almost
cosymplectic manifold tangent to the characteristic vector field
$\xi$ is minimal, so that the fibres are minimal: $H=0$. The
restriction to the fibres of $\delta$ is $\hat{\delta}$, so that
$\hat{\delta}\hat{\Phi} = 0$.

Since $\delta \Phi = 0$, the assertion follows from the structure
equation (\ref{codif2}).
\end{proof}

\proof[Acknowledgements] S.I. and G.E.V. are partially supported by
CNCSIS - UEFISCSU, project PNII - IDEI code 8/2008, contract no.
525/2009. R.M. acknowledges that this work was partially supported
by CNCSIS - UEFISCSU, project PNII - IDEI code 1193/2008, contract
no. 529/2009.


\begin{thebibliography}{10}


\bibitem{BI}
G. B\u{a}di\c{t}oiu and S. Ianu\c{s}, \emph{Semi-Riemannian
submersions from real and complex pseudo-hyperbolic spaces.} Differ.
Geom. Appl. \textbf{16}(2002), no.~1, 79--94.

\bibitem{BL}
D.E. Blair, \emph{Riemannian Geometry of Contact and Symplectic
Manifolds.} Birkh\"{a}user, Boston, 2002.

\bibitem{BOU}
J.P. Bourguignon and H.B. Lawson,
\emph{A mathematician's visit to
Kaluza-Klein theory.}
Rend. Semin. Mat. Torino, Fasc. Spec. (1989),
143--163.


\bibitem{BLW} J.P. Bourguignon and H.B. Lawson,
\emph{Stability and isolation phenomena for Yang-Mills fields.}
Commun. Math. Phys. \textbf{79}(1981), 189--230.


\bibitem{BHR}
A. Bonome, L.M. Hervella and I. Rozas,
\emph{On the classes of almost Hermitian structures on the tangent bundle
of an almost contact metric manifold.}
Acta Math. Hung.
\textbf{56}(1990), no.~1--2, 29--37.

\bibitem{CC}
J. Cabrerizo, A. Carriazo, L. Fern\'{a}ndez and M. Fern\'{a}ndez,
\emph{Riemannian submersions and slant submanifolds.}
Publ. Math.
\textbf{61}(2002), no.~3--4, 523--532.

\bibitem{CHEN}
B.-Y. Chen,
\emph{Riemannian submersions with minimal fibers and
affine hypersurfaces.}
Monatsh. Math. \textbf{151}(2007), no.~2,
143--152.

\bibitem{CHN1}
D. Chinea,
{\it Almost contact metric submersions.}
Rend. Circ. Mat.
Palermo, II. Ser. \textbf{34}(1985), 89--104.

\bibitem{CHN4}
D. Chinea, {\it Harmonicity of holomorphic maps between almost
Hermitian manifolds.}
Can. Math. Bull. 52(2009), no.~1, 18--27.

\bibitem{CORM}
V. Cort\'{e}s and T. Mohaupt, \emph{Special Geometry of Euclidean
Supersymmetry III: the local r--map, instantons and black holes},
JHEP 07(2009) 066.


\bibitem{ESC} R. Escobales,
\emph{Riemannian submersions from complex projective space.}
J.
Differ. Geom. \textbf{13}(1978), 93--107.

\bibitem{FAL} M. Falcitelli,
\emph{Some classes of almost contact metric manifolds and contact
Riemannian submersions.} Acta Math. Hung. \textbf{105}(2004), no.~4,
291--312.

\bibitem{FIP} M. Falcitelli, S. Ianu\c{s} and A.M. Pastore,
\emph{Riemannian submersions and related topics.}
World Scientific,
2004.

\bibitem{GR}
A. Gray,
\emph{Curvature identities for Hermitian and almost
Hermitian manifolds.}
Tohoku Math. J., II. Ser. \textbf{28}(1976),
601--612.

\bibitem{IMaV}
S. Ianu\c{s}, S. Marchiafava and G.E. V\^{\i}lcu,
\emph{Paraquaternionic CR-submanifolds of paraquaternionic
K\"{a}hler manifolds and semi-Riemannian submersions}, Cent. Eur. J.
Math. \textbf{8} (2010), no.~4, 735--753.

\bibitem{IMV} S. Ianu\c{s}, R. Mazzocco and G.E. V\^{\i}lcu,
\emph{Riemannian submersions from quaternionic manifolds.} Acta
Appl. Math. \textbf{104}(2008), no.~1, 83--89.

\bibitem{IP}
S. Ianu\c{s} and A.M. Pastore, \emph{Harmonic maps on contact metric
manifolds.} Ann. Math. Blaise Pascal \textbf{2}(1995), no.~2,
43--53.

\bibitem{IV}
S. Ianu\c{s} and M. Vi\c{s}inescu,
\emph{Kaluza-Klein theory with
scalar fields and generalized Hopf manifolds.}
Class. Quantum Grav.
\textbf{4}(1987), 1317--1325.

\bibitem{IV2}
S. Ianu\c{s} and M. Vi\c{s}inescu,
\emph{Space-time compactification
and Riemannian submersions.}
in The mathematical heritage of C.F.
Gauss (ed. G. Rassias), World Sci. Publishing, River Edge NJ (1991),
358--371.

\bibitem{KN}
S. Kobayashi and K. Nomizu,
\emph{Foundations of Differential
Geometry.}
vol. II, John Wiley, New York-London-Sydney, 1969.

\bibitem{MM}
R. Mocanu and M.I. Munteanu,
\emph{Gray curvature identities for almost contact metric
manifolds.}
J. Korean Math. Soc. \textbf{47}(2010), no.~3, 505--521.

\bibitem{MST} M.T. Mustafa,
\emph{Applications of harmonic morphisms to gravity.}
J. Math. Phys.
\textbf{41}(10) (2000), 6918--6929.

\bibitem{NAR}
F. Narita, \emph{CR-submanifolds of locally conformal Kaehler
manifolds and Riemannian submersions.}
Colloq. Math.
\textbf{70}(1996), no.~2, 165--179.

\bibitem{ON}
B. O'Neill,
\emph{Fundamental equations of submersions.}
Michigan
Math. J. \textbf{39}(1966), 459--464.

\bibitem{OLS}
Z. Olszak,
\emph{Almost cosymplectic manifolds with K\"{a}hlerian
leaves.}
Tensor, New Ser. \textbf{46}(1987), 117--124.

\bibitem{SAH}
B. Sahin,
\emph{Screen conformal submersions between lightlike
manifolds and semi-Riemannian manifolds and their harmonicity.}
Int.
J. Geom. Methods Mod. Phys. \textbf{4}(2007), no.~6, 987--1003.

\bibitem{TM}
T. Tshikuna-Matamba,
\emph{On the structure of the base space and
the fibres of an almost contact metric submersion.}
Houston J.
Math.\textbf{23}(1997), no.~2, 291--305.

\bibitem{VW}
E. Vergara-Diaz and C.M. Wood, \emph{Harmonic contact metric
structures and submersions.} Int. J. Math. \textbf{20}(2009), no.~2,
209--225.

\bibitem{VIS} M. Vi\c sinescu,
\emph{Space-time compactification induced by nonlinear sigma models,
gauge fields and submersions.} Czech. Journ. of Phys.
\textbf{B37}(1987), 525--528.

\bibitem{VLC2} G.E. V\^{\i}lcu,
\emph{A new class of semi-Riemannian submersions.} Rom. Journ.
Phys., \textbf{54}(2009), no. 9–-10, 815–-821.

\bibitem{VLC} G.E. V\^{\i}lcu,
\emph{3-submersions from QR-hypersurfaces of quaternionic K\"{a}hler
manifolds.} Ann. Pol. Math. \textbf{98}(2010), no. 3, 301--309.

\bibitem{WAT1} B. Watson,
\emph{ Almost Hermitian submersions.}
J. Differ. Geom.
\textbf{11}(1976), 147--165.

\bibitem{WATS} B. Watson,
\emph{$G,G'$-Riemannian submersions and nonlinear gauge field
equations of general relativity.}
in Global Analysis-Analysis on
Manifolds, dedic. M. Morse (ed. T. Rassias), Teubner-Texte Math.
\textbf{57}(1983), 324--349.

\bibitem{WAT} B. Watson,
\emph{Superminimal fibres in an almost Hermitian submersion.}
Boll. Unione Mat. Ital., Sez. B, Artic. Ric. Mat. \textbf{3}(2000),
no.~1, 159--172.

\bibitem{WV1} B. Watson and L. Vanhecke,
\emph{$K_1$-curvatures and almost Hermitian submersions.}
Univ.
Politec. Torino, Rend. Sem. Mat. \textbf{36}(1978), 205--224.

\bibitem{WV2} B. Watson and L. Vanhecke,
\emph{The structure equation of an almost
semi-K\"{a}hler submersion.}
Houston J. Math. \textbf{5}(1979),
295--305.

\end{thebibliography}
\end{document}